\newcommand{\df}{\displaystyle\frac}
\newcommand{\ds}{\displaystyle\sum}
\newcommand{\C}{{\mathbb C}}
\newcommand{\CP}{{\mathbb {CP}}}
\newcommand{\HP}{{\mathbb {HP}}}
\newcommand{\OP}{{\mathbb {OP}}}
\newcommand{\Q}{{\mathbb Q}}
\newcommand{\Z}{{\mathbb Z}}
\newcommand{\LP}{{\mathcal{L}}}
\newcommand{\AP}{{\hat{A}}}
\newcommand{\td}{\textup{Td}}
\newtheorem*{rep@theorem}{\rep@title}
\newcommand{\newreptheorem}[2]{%
\newenvironment{rep#1}[1]{%
 \def\rep@title{#2 \ref{##1}}%
 \begin{rep@theorem}}%
 {\end{rep@theorem}}}
\newtheorem{theorem}{Theorem}
\newtheorem{maintheorem}{Theorem}
\newtheorem{proposition}[theorem]{Proposition}
\newtheorem{corollary}[theorem]{Corollary}
\theoremstyle{definition}
\newtheorem{remark}[theorem]{Remark}
\title{Almost complex manifold with Betti number $b_i=0$ except $i=0, n/2, n$ }
\author{}
\author{Zhixu Su}
\address{Department of Mathematics, University of Washington Seattle}\email{zhixusu@uw.edu}\urladdr{}
\begin{document}

\begin{abstract} This paper studies existence of $n=4k\  (k>1)$ dimensional simply-connected closed almost complex manifold with Betti number $ b_i=0$ except $i=0, n/2, n$. We characterize all the rational cohomology rings of such manifolds and show they must have even Euler characteristic and even signature, which is to say the middle Betti number $b_{n/2}$ must be even. Parallel to the author's earlier work on realizing rational cohomology ring by smooth closed manifolds, we state and prove Sullivan's rational surgery realization theorem for almost complex manifold and demonstrate its application in our context. A prescribed rational cohomology ring can be realized by a simply connected almost complex manifold if and only if the ring structure supports the intersection form of a closed manifold, and it holds Chern numbers that satisfy the signature equation and the Riemann--Roch integrality relations, and the top Chern number equals the Euler characteristic.  According to Stong's characterization of $U$ and $SU$ cobordism, we explicitly compute the Riemann--Roch integrality relations among Chern numbers in the case when only the middle and top Chern classes can be nonzero. The necessary and sufficient conditions for realization are expressed as a set of congruence relations among the signature and Euler characteristic, we show that the lower bounds of the 2--adic order of the signature and the Euler characteristic increase with respect to the dimension of the realizing manifold.

\end{abstract}

\maketitle

\vspace{-0.8cm}
\section{introduction}

Let $H^*=\bigoplus_{i=0}^n H^i$ be a $1$--connected $\Q$ Poincar\'{e} duality algebra of dimension $n=4k \,(k>1)$, i.e., $H^*=\bigoplus_{i=0}^n H^i$ is a graded commutative algebra over $\Q$ with $H^i$ finite dimensional for each $i$, $H^1=0$, $H^0\cong H^n=\Q$, and the multiplication defines a non--degenerate bilinear mapping $H^i \times H^{n-i}\to H^n(\cong \Q)$ sending $(\alpha, \beta)\mapsto \alpha\beta$, which induces the Poincar\'{e} duality isomorphism $H^{n-i}\cong (H^i)^*$. 

%(\cite{halperin})

Prescribing a $1$--connected $\Q$ Poincar\'{e} duality algebra $H^*$, does there exist a simply connected smooth closed manifold $M^n$ whose rational cohomology ring $H^*(M;\Q)\cong H^*$? Sullivan's rational surgery realization theorem (\cite[Theorem 13.2]{Sullivan77}, \cite[Theorem 1]{Barge76}) provided necessary and sufficient conditions to answer this question. There has been work studying realization of certain prescribed algebras. In \cite{PL07}, Papadima and P\v{a}unescu analyzed the smoothing problem of Artinian complete intersection below dimension $12$. In the author's PhD thesis \cite{Su09} under the supervision of Jim Davis, and later paper \cite{Su14}, Sullivan's theorem was applied to study existence dimensions of rational projective plane, which is named for smooth closed manifolds with rational cohomology ring $H^*(M^n;\Q)\cong \Q[\alpha]/\langle \alpha^3\rangle, |\alpha|=n/2$, such manifolds have Betti numbers $b_i=0$ except $b_0=b_{n/2}=b_n=1$. The author's thesis gave a concrete proof illustrating how a rational degree one normal map was obtained in rational surgery if the rational cohomology ring supports Pontryagin numbers of a smooth manifold that satisfy the Riemann--Roch integrality relations, these relations were computed explicitly according to the Hattori--Stong Theorem that characterizes the smooth ($SO$) cobordism group. Combining the relations with the signature equation, the topological existence problem was reduced to a number--theoretic question asking whether there exist solutions to a system of Diophantine equations with rational coefficients involving Bernoulli numbers.

In \cite{Su09} and \cite{Su14}, the author showed that above dimension 4, 8, 16, in which the well-known complex, quaternionic, octonionic projective planes ($\CP^2, \HP^2, \OP^2$) exist respectively,  the smallest possible dimension a rational projective plane can exist is 32. In later papers \cite{FS16}, \cite{KS17}, the author and her collaborators Jim Fowler and Lee Kennard showed that (1) a rational projective plane of dimension greater than 4 can only exist in dimensions $n=8k$ where $k=2^a$ or $k=2^a+2^b$; (2) such manifold exist in dimension $n\leq 512$ if and only if $n\in\{4,8,16,32,128, 256\}$, and does not exist in any dimension $512 < n < 2^{13}$ except for four possible exceptions; (3) such manifold can not admit Spin structure if the dimension $n>16$. In 2017, Matthias Kreck suggested to the author that the rational projective plane problem is an example of the more general question of realizing prescribed Betti numbers, in light of which we phrase the question being studied in this paper as in the title.

In \cite{AM19}, Albanese and Milivojevi\'{c} showed that a closed almost complex manifold with sum of Betti numbers three can only exist in dimension $n=2^a$. Note that a closed smooth manifold with sum of Betti number three is exactly what we called a rational projective plane, existence of such smooth manifold was only confirmed in dimension $n=2^a$ with $a\in\{2,3,4,5,7, 8\}$. In \cite{FS16} we showed that the signature equation written in terms of the Pontryagin numbers $p_{n/4}$ and $p_{n/8}^2$ can obstruct  existence of such smooth manifold in any dimension other than $n=8(2^a)$ or $n=8(2^a+2^b)$ when $n> 4$, while Albanese and Milivojevi\'{c} expressed the Pontryagin numbers in terms of the Chern numbers $c_{n/2}$ and $c_{n/4}^2$ in the signature equation to obtain their result on almost complex manifold. 

In order to conclude if there exists any almost complex rational projective plane in dimension $n=2^a$,  the approach that worked for smooth projective plane suggests combining the signature equation with the Riemann--Roch integrality relations among Chern numbers of almost complex manifolds. Following up Albanese and Milivojevi\'{c}'s result published on arXiv in 2018, the author adopted one particular integrality condition of the Chern number $c_{n/4}^2$ according to Stong's theorem characterizing the almost complex ($U$) cobordism, it turned out that this condition combined with the signature equation can rule out the existence of almost complex rational projective planes in any dimension $n>4$ (see Corollary \ref{nonexist-2}), i.e.,there does not exist any closed almost complex manifold with sum of Betti numbers three above dimension 4. The author did not publish this result but had announced it in a conference in summer 2018.

Motivated by the negative result in realizing middle Betti number $b_{n/2}=1$, this paper studies existence of $n$--dimensional almost complex manifold with Betti number $b_i=0$ except $b_0=b_n=1$ and $b_{n/2}\geq 1$. Parallel to the smooth manifold realization problem, we apply the version of Sullivan's theorem for almost complex manifold.  In 2019, the author was informed by Sullivan that his student Jiahao Hu had also obtained the nonexistence result of almost complex manifold with sum of Betti numbers three (\cite{JH21}), and his student Aleksandar Milivojevi\'{c} will be writing his thesis on the version of rational surgery realization theorem on almost complex manifolds. As both parties had already obtained partial results at the time, the author and Milivojevi\'{c} communicated with each other and shared ideas on the realization theorem and its application. The author thank Sullivan and Milivojevi\'{c} for their communication, in particular, for pointing out that in the proof of the theorem, the pullback space for surgery is non-simply-connected when the input map $X\to BU_0$ has vanishing $c_1$ class. Milivojevi\'{c}'s PhD thesis \cite{Mil21} gave an exposition of Sullivan's theorem on realizing rational homotopy types by closed almost complex manifold, including a careful explanation on how surgery was performed on a rational degree one normal map to obtain a rational homotopy equivalence. In this paper, we phrase Sullivan's theorem by stating the necessary and sufficient conditions for realizing a rational cohomology ring by simply-connected closed almost complex manifold, followed by a proof parallel to the one given in the author's thesis for smooth manifold, we explicitly compute the conditions under our Betti number assumption for any dimension $n=4k$. The author would like to thank Jim Davis for some helpful discussions.

In section \ref{sec:2}, we state Sullivan's theorem on realizing a rational cohomology ring by $n=4k$ dimensional simply-connected closed almost complex manifold.  Compared to the original theorem for smooth manifold, the necessary and sufficient conditions are expressed in terms of Chern numbers instead of Pontryagin numbers. Parallel to Hattori-Stong integrality relations among Pontryagin numbers, the Riemann--Roch integrality relations among Chern numbers of almost complex manifolds are expressed according to Stong's Theorem characterizing $U$ and $SU$ cobordism. Depending on the first Chern class being non-zero or zero, the input Chern classes are either maps from the rational space $X$ to the classifying space $BU$ or $BSU$ respectively.

In section \ref{sec:3}, we show that under the assumption when all Chern classes vanish except possibly for the middle and top dimensional classes $c_{n/4}$ and $c_{n/2}$, the Riemann--Roch integrality relations among Chern numbers of $n=4k$--dimensional almost complex manifold are equivalent to a set of three sub-relations, the rational coefficients of these relations are computed explicitly. We also compute the additional relations among Chern numbers of $SU$ manifold when the dimension $n\equiv 4\pmod{8}$, .

In section \ref{sec:4}, we apply the realization theorem to formulate the necessary and sufficient conditions for a rational cohomology ring under our Betti number assumption to be realized by a simply-connected almost complex manifold. In dimension $n=8k$ and $n=8k+4$ respectively, we characterize all  realizable rational cohomology rings by a set of congruence relations among the signature and Euler characteristic. Examples in dimensions 8, 12, and 16 are computed explicitly. Then we derive the following divisibility statements on the 2-adic orders of the signature and Euler characteristic.

\begin{maintheorem}[Theorem 11 and 12]\label{sigma-euler-div-8k}

If $M^n$ is an $n=8k$--dimensional almost complex manifold with Betti number $b_i=0$ except $b_0=b_n=1$ and $b_{n/2}\geq 1$, the 2--adic order of its signature $\sigma$ and Euler characteristic $\chi$ must satisfy \begin{eqnarray} 
&&\nu_2(\sigma) \geq 4k - 2\nu_2(k) - 3, \label{sigma-div-8k}\nonumber\\
&&\nu_2(\chi) \geq 4k - 2\nu_2(k) - 2\textup{wt}(k)-2. \label{euler-div-8k}\nonumber
\end{eqnarray}

\begin{small}For example, in dimension $n=8$, $\sigma\equiv 0\bmod{2}$,

\hspace{1.75cm} in dimension $n=16$, $\sigma\equiv 0\bmod{2^3}$ and $\chi\equiv 0\bmod{2^2}$,

\hspace{1.75cm} in dimension $n=24$, $\sigma\equiv 0\bmod{2^9}$ and $\chi\equiv 0\bmod{2^6}$.
\end{small}

If $M^n$ is an $n=8k+4 (k>0)$--dimensional almost complex manifold with Betti number $b_i=0$ except $b_0=b_n=1$ and $b_{n/2}\geq 1$, 
\begin{eqnarray}
&&\nu_2(\sigma)\geq 4k+4 \nonumber\label{sigma-div-8k+4.}\\
&&\nu_2(\chi)\geq 4k-2\textup{wt}(k)\nonumber \label{euler-div-8k+4.}
\end{eqnarray}

\end{maintheorem}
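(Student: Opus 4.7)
The plan is to derive the congruences directly from the realization framework of Sections \ref{sec:2}--\ref{sec:3}. Suppose $M^n$ satisfies the Betti-number hypotheses, and let $a := c_{n/4}^2[M]$ be the unique possibly nonzero Chern number besides the top one $c_{n/2}[M] = \chi$. Writing the $L$-polynomial in terms of Chern classes via $\sum_i (-1)^i p_i = c(TM)\cdot c(\overline{TM})$ and evaluating on $[M]$ yields an affine relation
\[ \sigma = \lambda_n \, a + \mu_n \, \chi \]
with explicit Bernoulli-flavored coefficients $\lambda_n, \mu_n \in \Q$ depending only on $n = 4k$. Solving for $a$ expresses the unknown Chern number $c_{n/4}^2[M]$ as a fixed $\Q$-linear combination of $\sigma$ and $\chi$.

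Next I would substitute this expression into each of the Riemann--Roch integrality relations computed explicitly in Section \ref{sec:3} --- three $U$-relations, together with the extra $SU$-relation that appears when $n \equiv 4 \pmod{8}$. Each substitution produces a rational linear combination of $\sigma$ and $\chi$ that must be an integer; after clearing denominators this gives a family of congruences of the form $P_i(k)\,\sigma + Q_i(k)\,\chi \equiv 0 \pmod{D_i(k)}$ with explicit $P_i, Q_i, D_i \in \Z$. Taking the $\Z$-linear combination of these that eliminates $\chi$ yields the bound on $\nu_2(\sigma)$; the combination that eliminates $\sigma$ yields the bound on $\nu_2(\chi)$. The split between the dimension cases $n = 8k$ and $n = 8k+4$ reflects the availability of the extra $SU$-relation in the latter.

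The crux of the proof is the $2$-adic bookkeeping. The coefficients $\lambda_n, \mu_n$ and the denominators $D_i(k)$ all involve Bernoulli numerators, factorials, and central binomial coefficients, and the specific exponents $4k - 2\nu_2(k) - 3$ and $4k - 2\nu_2(k) - 2\,\textup{wt}(k) - 2$ should fall out of a careful balance sheet using Legendre's formula $\nu_2(k!) = k - \textup{wt}(k)$, Kummer's identity $\nu_2\binom{2k}{k} = \textup{wt}(k)$, and the $2$-adic valuations of $B_{2k}$ supplied by von Staudt--Clausen. I expect the main difficulty to be managing this bookkeeping through the elimination step without losing cancellations: one must identify the $\Z$-linear combinations of relations that maximally cancel the $2$-part of the denominators, and verify that the $\nu_2(k)$ and $\textup{wt}(k)$ contributions combine in exactly the form stated. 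The asymmetric appearance of $\textup{wt}(k)$ in the bound on $\chi$ but not on $\sigma$ should arise because the relations that directly involve $c_{n/2}$ introduce a central-binomial factor, while those used to isolate $\sigma$ allow that factor to be absorbed in the elimination.
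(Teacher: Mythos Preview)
Your plan is in the right spirit but the paper organizes the elimination differently, and that difference is exactly what makes the bookkeeping tractable. You propose to solve the signature equation for $a$ and then substitute into the integrality relations; the paper instead keeps $x := c_{n/4}^2[M]$ as a free integer throughout and never divides by $\lambda_n$. For $n=8k$, the signature equation and the Todd--genus relation are each rewritten with the block $(x-2\chi)$ separated out,
\[
2s_k^2\, x - s_{2k}(x-2\chi)=\sigma,\qquad t_{2k}^2\, x - t_{4k}(x-2\chi)=2m\ (m\in\Z),
\]
and the identity $s_{2k} = -2^{4k}(2^{4k-1}-1)\,t_{4k}$ eliminates $(x-2\chi)$---not $\chi$ alone---by a single integer multiple. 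What remains involves only $x$, $m$, and $\sigma$; the $e_1^c e_1^c$ relation $x\in 2[(2k-1)!]^2\Z$ then controls the $x$-term, and $\nu_2(|B_{2k}|/2k)=-\nu_2(k)-2$ yields $4k-2\nu_2(k)-3$ directly, with no $\wt(k)$ appearing because the factorials have cancelled. For the $\chi$-bound the paper does \emph{not} use the signature equation at all: it multiplies the $e_1^c\cdot\td$ relation by $(4k-1)!$ and feeds in the same divisibility of $x$, with $\wt(k)$ entering via $\nu_2[(2k-1)!]=2k-\nu_2(k)-\wt(k)-1$ rather than through a central binomial coefficient. In dimension $8k+4$ the same $(x-2\chi)$ elimination, now using the $SU$ strengthening $\langle\td,\mu\rangle\in 2\Z$, gives $\sigma \in 2^{4k+4}(2^{4k+1}-1)\Z$ in one line, and $\chi$ follows from $x\in 2[(2k)!]^2\Z$ together with $x-2\chi\in 2(4k+1)!\,\Z$.

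Your route is not wrong in principle, but solving for $a$ first pushes the awkward denominator $2s_k^2 - s_{2k}$ into every subsequent relation, and recovering the sharp exponents would then require undoing that division---which is precisely the $(x-2\chi)$ elimination in disguise. The linear combination you should look for acts on the original system \emph{before} any substitution, not on the post-substitution congruences in $\sigma$ and $\chi$.
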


\begin{maintheorem}[Theorem 13 and Corollary 14]\label{nonexist 1 and 2} 
An $n=4k (k>1)$--dimensional closed almost complex manifold with Betti number $b_i=0$ except $b_0=b_n=1$,$b_{n/2}\geq 1$ must have even signature $\sigma$ and even Euler characteristic $\chi$, i.e., the middle Betti number $b_{n/2}$ must be even.

In dimension greater than 4, the rarely existing rational projective plane \textup{(}smooth manifold whose Betti number $b_0=b_{n/2}=b_n=1$\textup{)} does not admit any almost complex structure.  Equivalently, there does not exist any closed almost complex manifold whose sum of Betti numbers equals three. 
\end{maintheorem}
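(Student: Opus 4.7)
The plan is to reduce both assertions to Theorem~\ref{sigma-euler-div-8k} via an elementary parity argument on the intersection form; the substantive work has been absorbed into the divisibility bounds already established, and what remains is essentially parity bookkeeping.

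First, on any closed oriented $4k$-manifold the intersection pairing on $H^{n/2}(M;\R)$ has signature $\sigma = p - q$ and rank $b_{n/2} = p + q$, so $\sigma \equiv b_{n/2} \pmod{2}$. Under the Betti-number hypothesis the Euler characteristic is $\chi = 2 + b_{n/2}$, so the three conditions ``$\sigma$ even'', ``$\chi$ even'', and ``$b_{n/2}$ even'' are equivalent. The first statement therefore reduces to showing $\sigma$ is even in every admissible dimension.

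Next I would invoke Theorem~\ref{sigma-euler-div-8k}. For $n = 8k$ with $k \geq 1$, the bound $\nu_2(\sigma) \geq 4k - 2\nu_2(k) - 3$ is $\geq 1$ once $2k - \nu_2(k) \geq 2$, which is immediate from $\nu_2(k) \leq \log_2 k$ (and is tight at $k = 1$). For $n = 8k+4$ with $k \geq 1$ the bound $\nu_2(\sigma) \geq 4k + 4 \geq 8$ is far stronger than needed. In every admissible dimension $\sigma$ is even, so the first assertion follows.

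For the corollary, suppose the sum of Betti numbers of a closed almost complex $M^n$ equals $3$ with $n > 4$. Since $M$ is almost complex, $n$ is even; and $b_0 = b_n = 1$ accounts for two of the three units, so Poincar\'{e} duality forces the remaining nonzero Betti number to be $b_{n/2} = 1$. If $n \equiv 2 \pmod{4}$, the intersection pairing on $H^{n/2}$ is a non-degenerate skew form, already forcing $b_{n/2}$ to be even; if $n \equiv 0 \pmod{4}$, then $n = 4k$ with $k \geq 2$ and the first part gives the same conclusion. Either way this contradicts $b_{n/2} = 1$, so no such manifold exists. The main obstacle in this program is therefore located entirely upstream, in the $2$-adic bounds on $\sigma$ provided by Theorem~\ref{sigma-euler-div-8k}.
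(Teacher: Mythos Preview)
Your proposal is correct and follows essentially the same route as the paper: the paper's proof of Theorem~13 is the single line ``Theorem~\ref{sigma-euler-div-8k} and Theorem~\ref{sigma-euler-div-8k+4} imply the following,'' and Corollary~14 is stated without further argument. Your write-up supplies the parity bookkeeping ($\sigma\equiv b_{n/2}\pmod 2$, $\chi=2+b_{n/2}$), verifies explicitly that the $2$-adic bounds on $\sigma$ are at least~$1$, and separately disposes of the case $n\equiv 2\pmod 4$ for the sum-of-Betti-numbers-three statement via the skew-symmetry of the middle pairing; these are exactly the details the paper leaves to the reader.
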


\section{The rational surgery realization theorem for almost complex manifold}
\label{sec:2}

Firstly, we give a brief review of the original rational surgery realization theorem on smooth manifold (\cite[Theorem 13.2]{Sullivan77}, \cite{Barge76}). 
If a 1-connected rational Poincar\'{e} duality algebra $H^*$ can be realized as the rational cohomology ring  of an $n=4k$--dimensional closed smooth manifold, it must satisfy the following necessary conditions. The bilinear form $(H^{2k},\lambda)$, being induced from the intersection form of a closed manifold, must be isomorphic to an orthogonal sum of copies of $\langle 1\rangle$ and $\langle -1\rangle$ over $\Q$. The ring $H^*$ must contain cohomology classes $p_1,\ldots,p_k$, so that the pairing of their products of total degree $n$ with a fundamental homology class $\mu$ agree with the Pontryagin numbers of a smooth manifold, so the numbers must satisfy  the Hirzebruch signature equation and the Riemann-Roch Integrality relations which characterize the smooth ($SO$) cobordism. Sullivan's theorem showed that these necessary conditions are sufficient to construct a simply-connected closed smooth manifold realizing $H^*$ using rational surgery. The input data for rational surgery is a $\Q$--local $\Q$ Poincar\'{e} duality space $X^n$ with cohomology classes $p_i\in H^{4i}(X;\Q)$ for $i=1,\ldots, k$ and a fundamental class $\mu\in H_n(X;\Z)\cong \Q$. The pullback digram of the classifying map $p: X \xrightarrow{(p_1,\ldots,p_k)} BSO_{0}\simeq \prod\limits K(\mathbb{Q},4i)$ and the localization $BSO\to BSO_{(0)}$ gives rise to a normal map $f: M\to X$ such that $f^*(p_i)=p_i(\tau_M)$. If the pairings $\langle p_I, \mu\rangle$ satisfy the Riemann-Roch Integrality relations, a normal map of degree one in the sense that $f_*[M]=\mu$ is guaranteed to exist. Then surgery can be performed on the candidate normal map to obtain a rational homotopy equivalence if the intersection form condition and the signature equation are satisfied to give a vanishing surgery obstruction. 

Analogous to the smooth case, if a 1-connected rational Poincar\'{e} duality algebra $H^*$ can be realized as the rational cohomology ring  of a $n=4k$--dimensional closed almost complex manifold, besides the condition on the intersection form, the ring $H^*$ must contain cohomology classes $c_1,\ldots,c_{2k}$, so that the pairing of their products of total degree $n$ with a fundamental homology class $\mu$ agree with the Chern numbers of an almost complex manifold, therefore the numbers must satisfy the signature equation and the Riemann-Roch Integrality relations which characterizes the almost complex ($U$) cobordism, as well as the relations for $SU$ cobordism if $c_1=0$. Moreover, the number $\langle c_{n/2}, \mu\rangle$ should match with the Euler characteristic of $H^*$.  The following version of Sullivan's theorem states that these necessary conditions are also sufficient for the realization. 

\begin{theorem}\label{acthm} Let $H^*=\bigoplus_{i=0}^n H^i$ be a $1$--connected $\Q$ Poincar\'{e} duality algebra of dimension $n=4k$ with $k>1$. There exists $n=4k (k>1)$--dimensional, simply-connected, closed, almost complex manifold $M$ such that $H^*(M;\Q)\cong H^*$ if and only if there exist choice of total class $c=1+c_1+c_2+\cdots+c_{2k} \in \bigoplus_{i=0}^{2k} H^{2i}$, and a fundamental class $\mu \in H_0=(H^n)^*\cong \Q$ that satisfy the following conditions:

\begin{enumerate}[label={\upshape(\roman*)}]
\item The bilinear form $\lambda_{\mu}: H^{2k}\times
H^{2k}\rightarrow \Q$ defined as $(\alpha, \beta)\mapsto\langle\alpha\beta,\mu\rangle$ is isomorphic to an orthogonal sum of copies of $\langle1\rangle$ and $\langle-1\rangle$.

\item
Let $p_i(c)=(-1)^i\sum_{j=0}^{2i}(-1)^{j}c_jc_{2i-j}$, the $k$--th $\LP$--polynomial of $p_i(c)$ paired with $\mu$ equals the signature of the bilinear form $(H^{2k},\lambda_{\mu})$,
\begin{equation}\label{signature-eq}\langle \LP_k (p_1(c), \ldots p_k(c)), \mu\rangle=\sigma(H^{2k}, \lambda_{\mu})
\end{equation}

\item
The pairings $\langle c_I, \mu\rangle=\langle c_{i_1}\cdots c_{i_r}, \mu\rangle$ over all the partitions $I=(i_1,\ldots,i_r)$ of $2k$ agree with the Chern numbers of an almost complex manifold, i.e., there exists a $4k$-dimensional almost complex manifold $N$ such that $\langle c_I(\tau_N),[N]\rangle=\langle c_I, \mu\rangle$ for all partitions $I$ of $2k$. By \cite[Theorem 1]{Stong65I}, this is equivalent to say the numbers $\langle c_I, \mu\rangle$ must satisfy the integrality relations among Chern numbers of almost complex manifold given by the Hirzebruch-Riemann-Roch Theorem, i.e.,
\begin{equation}\label{BUrelation}\langle \Z[e^c_1, e^c_2, \ldots] \mathord{\cdot} \td, \mu\rangle\in \Z \end{equation}
where $e^c_l$ is the $l$-th elementary symmetric polynomial of the variables $e^{x_i}-1=\sum_{m=1}^{\infty}\frac{x_i^m}{m!}$. By the formal expression $c= \prod_{i}(1+x_i)$, each $e^c_l$ can be expressed as a polynomial in $c_1, c_2, \ldots, c_{2k}$ with rational coefficients, 
$$e^c_l:=\sigma_l(e^{x_1}-1, e^{x_2}-1, \cdots)=c_l+\textup{higher degree terms}.$$

 The Todd class $\td$ is expressed as a polynomial in the $c_i$ classes:
$$\td=\prod_i\df{x_i}{1-e^{-x_i}}=1+\frac{c_1}{2}+\frac{c_1^2+c_2}{12}+\textup{higher degree terms}$$

\item 

If $n\equiv 4 \pmod{8}$,

Case 1. If $c_1\neq 0$, no additional condition besides \textup{(i)-(iv)}.

Case 2. If $c_1=0$, the parings $\langle c_I, \mu\rangle$ over all the partitions $I$ of $2k$ agree with the Chern numbers of a $SU$ manifold, i.e., there exists a $4k$-dimensional $SU$ manifold $N$ such that $\langle c_I(\tau_N),[N]\rangle=\langle c_I, \mu\rangle$ for all partitions $I$ of $2k$. By \cite[Theorem 1]{Stong65II} and \cite[Page 259]{Stong68}, this is equivalent to say the numbers $\langle c_I, \mu\rangle$ 
must satisfy the integrality relations among Chern numbers of SU manifold given by the Riemann-Roch Theorem, i.e., in additional to the relations \eqref{BUrelation} in \textup{(iii)}, 
\begin{equation}\label{BSUrelation4mod8}\langle\Z[e^{p(c)}_1, e^{p(c)}_2, \ldots] \mathord{\cdot} \AP, \mu\rangle\in 2\Z\end{equation}
where $e^p_l(c)$ is the $l$-th elementary symmetric polynomial of the variables $e^{x_i}+e^{-x_i}-2=2\sum_{m=1}^{\infty}\frac{x_i^{2m}}{(2m)!}$. By the formal expression $c= \prod_i(1+x_i)$ and $p(c)=\prod_i(1+x_i^2)$, each $e^{p(c)}_l$ can be expressed as a polynomial in $c_1, c_2, \ldots, c_{2k}$ with rational coefficients, 
$$e^{p(c)}_l:=\sigma_l(e^{x_1}+e^{-x_1}-2, e^{x_{2}}+e^{-x_{2}}-2, \cdots)=p_l(c)+\textup{higher degree terms}.$$
The A--hat class $\AP$ is expressed as a polynomial in the $c_i$ classes:
$$\AP=\prod_i\df{x_i}{2\sinh(x_i/2)}=1-\frac{p_1(c)}{24}+\frac{7p_1(c)^2+4p_2(c)}{5760}+\textup{higher degree terms}.$$

\item 
The number $\langle c_{2k}, \mu\rangle=\chi(H^*) = \sum_{i=0}^n (-1)^i\textup{rk}(H^i)$, the Euler characteristic of $H^*$.

\end{enumerate}

Any choice of $c$ and $\mu$ satisfying the conditions corresponds to a realizing manifold $M$ whose Chern numbers $\langle c_I(\tau_M),[M]\rangle=\langle c_I, \mu\rangle$. In particular, if the choice has $c_1=0$, $H^*$ can be realized by an almost complex manifold $M$ with integral first Chern class $c_1(\tau_M)=0$, i.e., $H^*$ can be realized by a SU manifold; if the choice has $c_1\neq 0$, $H^*$ can be realized by an almost complex manifold $M$ with integral first Chern class $c_1(\tau_M)\neq 0$.\\

\end{theorem}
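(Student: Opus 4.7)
The plan is to prove the two implications separately. The ``only if'' direction is essentially classical: setting $c_i = c_i(\tau_M)$ and $\mu = [M]$, condition (i) is the diagonalizability over $\Q$ of the intersection form of a smooth manifold; (ii) is the Hirzebruch signature formula; (iii) follows from Hirzebruch--Riemann--Roch applied to virtual bundles built from the universal bundles, combined with Stong's characterization \cite{Stong65I} of the image of $\Omega^U_{4k}$ in $H_{4k}(BU;\Z)$; (iv) is the analogous statement for $SU$-cobordism (\cite{Stong65II}, \cite[p.\,259]{Stong68}); and (v) is the standard identity $\langle c_{2k},[M]\rangle = \chi(M)$.

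For sufficiency I follow the rational surgery template used in the author's thesis for the smooth case, adapted to the complex setting. First, Sullivan's realization of simply-connected rational Poincar\'{e} duality algebras by rational spaces produces a $\Q$-local simply-connected CW complex $X$ with $H^*(X;\Q)\cong H^*$ and the prescribed fundamental class $\mu$. The chosen classes $c_1,\dots,c_{2k}$ assemble into a classifying map
\[
c\colon X \longrightarrow BU_0 \simeq \prod_{i\ge 1} K(\Q,2i)
\]
(or $X \to BSU_0 \simeq \prod_{i\ge 2} K(\Q,2i)$ when $c_1=0$). Pull back the rationalization $BU \to BU_0$ (respectively $BSU \to BSU_0$) along $c$ to obtain a space $\widetilde{X}$ carrying a candidate stable complex tangential structure together with a map $\widetilde{X}\to X$.

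The integrality relations in (iii) (together with (iv) when $c_1=0$ and $n\equiv 4\pmod 8$), by Stong's theorem, characterize precisely those rational Chern numbers that arise from an actual almost complex (respectively $SU$) manifold. Hence, after possibly passing to a sufficiently divisible multiple, the bordism-theoretic obstruction to finding a degree-one normal map $f\colon M \to X$ with $f^*c_i = c_i(\tau_M)$ and $f_*[M] = \mu$ vanishes, and such an $f$ exists. One then performs surgery on $f$ below the middle dimension to make it $(2k-1)$-connected. The remaining surgery obstruction lies in $L_{4k}(\Z)\otimes\Q$ and is detected by the signature of the middle-dimensional surgery kernel; the Hirzebruch signature theorem applied to $M$ together with (ii) forces this signature to vanish, and (i) ensures that the stabilized kernel form is hyperbolic over $\Q$. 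Finitely many middle-dimensional surgeries therefore upgrade $f$ to a rational homotopy equivalence, and condition (v) is the consistency check $\chi(M)=\chi(H^*)$.

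The principal obstacle is the $c_1=0$ case in Step~2: as the author credits to Sullivan and Milivojevi\'{c}, the naive pullback $\widetilde{X}\to X$ along $X\to BSU_0$ typically fails to be simply-connected because $BSU_0$ has a nontrivial $K(\Q,2)$ factor missing in $BSU$, so the rational surgery machine cannot be applied verbatim. The technical resolution requires replacing $\widetilde{X}$ by an appropriate cover (or equivalently modifying the pullback construction) so that one surgers on a simply-connected source still equipped with the prescribed Chern classes. This is the delicate new ingredient distinguishing the almost complex theorem from the smooth ($SO$) version established in the author's earlier work.
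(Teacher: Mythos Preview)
Your outline follows the paper's rational surgery approach, but there are two genuine errors.

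First, you have the $\pi_1$ obstacle backwards. The pullback of $BSU \to BSU_0$ along $c\colon X \to BSU_0$ \emph{is} simply connected: since $\pi_2(BSU)=\pi_2(BSU_0)=0$, the homotopy fiber of $BSU\to BSU_0$ has trivial $\pi_1$. (Also, $BSU_0 \simeq \prod_{i\ge 2} K(\Q,2i)$ has no $K(\Q,2)$ factor, contrary to what you wrote.) The obstruction appears in the \emph{other} case: pulling back $BU \to BU_0$ along $c\colon X \to BU_0$ yields a simply connected space iff $\pi_2(X)\to\pi_2(BU_0)\cong\Q$ is surjective, i.e.\ iff $c_1\neq 0$; when $c_1=0$ the pullback acquires $\pi_1\cong\Q/\Z$. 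The paper's resolution is precisely to replace $BU$ by $BSU$ in the $c_1=0$ case---which you already did in your setup---so there is no residual obstacle and no need to pass to a cover.

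Second, you treat condition (v) as a mere consistency check, but in the paper it does essential work. Surgery produces only a \emph{stably} almost complex manifold $M$ with $f^*c_i=c_i(\tau_M)$; one must still destabilize the complex structure on $\tau_M\oplus\text{trivial}$ to an honest almost complex structure on $\tau_M$. The paper invokes results of Thomas and Sutherland: a stable complex structure on a closed $4k$-manifold destabilizes iff the top stable Chern number equals the Euler characteristic. Condition (v) gives $\langle c_{2k},\mu\rangle=\chi(H^*)$, and once $f$ is a rational equivalence this becomes $\langle c_{2k}(\tau_M),[M]\rangle=\chi(M)$, which is exactly the required hypothesis.

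A smaller point: the rational surgery obstruction lives in $L_{4k}(\Q)\cong\Z\oplus\bigoplus_\infty\Z_2\oplus\bigoplus_\infty\Z_4$, not in $L_{4k}(\Z)\otimes\Q$. Condition (i) is what kills the torsion (Witt-invariant) summands; condition (ii) then handles the $\Z$ summand via the signature.
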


\begin{proof} ($\Longrightarrow$) 
It is mostly straightforward that the conditions are necessary. If $M$ is a simply-connected almost complex manifold such that $H^*(M;\Q)\cong H^*$, let $c$ be the rational total Chern class $c(\tau_M)\in \bigoplus_{i=0}^{2k} H^{2i}(M;\Q)$, let $\mu$ be the image of the fundamental class $[M]$ in $H_{n}(M;\Q)$,

(i) The bilinear form $(H^{2k}, \lambda_\mu)$ is isomorphic to the rational intersection form of $M$, since the intersection form of a $4k$--dimensional manifold is a symmetric inner product space over $\Z$, $(H^{2k}, \lambda_\mu)$ is in the image of $W(\Z)\to W(\Q)$, which consists exactly of orthogonal sum of copies of $\langle 1\rangle$ and $\langle -1\rangle$ (\cite[IV. 2.6]{MilnorHusemoller73}).

(ii) The Pontryagin classes of the tangent bundle of $M$ can be written in terms of the Chern classes by the formula $p_i(\tau_M)=(-1)^i\sum_{j=0}^{2i}(-1)^{j}c_j(\tau_ M)c_{2i-j}(\tau_M)$. By the Hirzebruch signature theorem, $\langle \LP_k (p_1(c), \ldots p_k(c)), \mu\rangle=\langle \LP_k (p_1(\tau_M), \ldots p_k(\tau_M)), [M]\rangle=\sigma(M)=\sigma(H^{2k}, \lambda_{\mu})$.

(iii) For all partitions $I=(i_1,i_2, \ldots, i_r)$ of $2k$, the numbers $\langle c_I, \mu\rangle=\langle c_I(\tau_M), [M]\rangle$ must satisfy the Riemann--Roch integrality relations among Chern numbers of almost complex manifold.  By Stong's theorem (\cite[Theorem 1]{Stong65I}), these relations can be formulated as \eqref{BUrelation}.

(iv) To see {\it Case 2}, note that the realizing almost complex manifold $M$ is assumed to be simply-connected, since $H^2(M;\Z)$ is free, its integral first Chern class is zero if and only if the rational first Chern class is zero. If $M$ has the first Chern class $c_1(\tau_M)=0$, $M$ is a $SU$ manifold, the numbers $\langle c_I, \mu\rangle=\langle c_I(\tau_M), [M]\rangle$ satisfy the Riemann--Roch integrality relations among Chern numbers of $SU$ manifold. By Stong's theorem (\cite[Theorem 1(a)]{Stong65II}, \cite[Page 259]{Stong68}), if $n\not\equiv 4 \pmod{8}$,  the $SU$ relations agree with the $U$ relations \eqref{BUrelation} . If $n\equiv 4 \pmod{8}$, the $SU$ relations include both \eqref{BUrelation} and \eqref{BSUrelation4mod8}. The proof for the sufficiency of the conditions provides some further explanations on these integrality relations.

(v) The top Chern class $c_{2k}(\tau_M)$, i.e., the Euler class, paired with $[M]$ is equal to the Euler characteristic of $M$, so $\langle  c_{2k}, \mu\rangle =\langle  c_{2k}(\tau_M), [M]\rangle=\chi(M)= \chi(H^*)$.\\

($\Longleftarrow$) By \cite[page 210, Theorem 1]{Quillen69}, if $H^*=\bigoplus_{i=0}^n H^i$ is a graded commutative algebra over $\Q$ with $H^i$ finite dimensional for each $i$ and $H^1=0$, $H^0=\Q$, then $H^*$ is isomorphic to the rational cohomology ring of a simply connected space. Let $X$ be a $n=4k$--dimensional simply-connected $\Q$--local (rational) space such that $H^*(X; \Q)\cong H^*$. Assume there exist total cohomology class $c=1+c_1+c_2+\cdots+c_{2k} \in \bigoplus_{i=0}^{2k} H^{2i}(X;\Q)$ and a fundamental class $\mu \in H_n(X;\Z)\cong H_n(X;\Q)\cong \Q$, we apply rational surgery to show the existence of an $n=4k (k>1)$--dimensional almost complex manifold $M^n$ and a $\Q$--homotopy equivalence $f:M\to X$ such that $f^*(c_i)=c_i(\tau_M)$ and $f_*[M]=\mu$.

We state a proof in a similar fashion to the one given in the author's thesis \cite{Su09} (and \cite{Su14}) for Sullivan's theorem on smooth manifold. Any choice of cohomology class $c=1+c_1\cdots+c_{2k} \in \bigoplus_{i=0}^{2k} H^{2i}(X;\Q)$ corresponds to a map
$$c: X \xrightarrow{(c_1,\ldots,c_{2k}, 0,\ldots)} \prod_{i=1}^m K(\mathbb{Q},2i)\simeq BU(m)_{0}$$
where $BU(m)_{0}$ is the rationalization (localization) of $BU(m)=G_m(\C^{\infty})$ and $m>>n$.  Let $\gamma^m_{\C}=\gamma^m(\C^{\infty})$ denote the universal complex $m$--plane bundle, which is a real $2m$--plane bundle, define the map 
\[\overline{c}(\gamma):BU(m)\xrightarrow{(\overline{c}_1(\gamma^m_{\C}),\ldots,\overline{c}_{m}(\gamma^m_{\C}))}
 \prod\limits_{i=1}^m K(\mathbb{Q},2i)\simeq BU(m)_{0},\]
 where $\overline{c}(\gamma^m_{\C})=1+\overline{c}_1(\gamma^m_{\C})+\cdots+\overline{c}_m(\gamma^m_{\C})+\cdots\in \prod H^{2i}(BU(m);\Q)$ is the inverse class determined algebraically by $c(\gamma^m_{\C})\overline{c}(\gamma^m_{\C})=1$. The map $\overline{c}(\gamma)$ induces isomorphism on $\pi_*(-)\otimes \Q$ and $H_*(-; \Q)$. As shown in the diagram below, let $PB$ be the homotopy pullback space of $c$ and
$\overline{c}(\gamma)$, let $\xi^m_{\C}=pr_2^*(\gamma^m_{\C})$ be the induced bundle over $PB$. The map $pr_1: PB\to X$ induces isomorphism on $\pi_*(-)\otimes \Q$ for $*>1$. 
\begin{small}\[\xymatrix{
\nu_{M}^m \ar[d] \ar[rr]^{\widehat{g}}  && \xi^m_{\C} \ar[d] \ar[r]^{\widehat{pr_2}} & \gamma^m_{\C} \ar[d]\\  
M \ar[rrd]_f \ar[rr]^{g} &&PB \ar[d]^{pr_1} \ar[r]^-{pr_2} & BU(m)
\ar[d(0.88)]^{\overline{c}(\gamma)} \\
 && X \ar[r]^-{c} & \prod\limits_{i=1}^m K(\mathbb{Q},2i)\simeq BU(m)_{0} 
 }\]\end{small}

We would like to construct a normal map $(g, \widehat{g}): (M, \nu_M)\to (PB, \xi)$ for simply-connected surgery. In the original realization theorem on smooth manifold, the pullback space $PB$ is automatically simply-connected for any input map $p: X \to\prod\limits K(\mathbb{Q},4i)\simeq  BSO_{0}$. This is not the case for almost complex manifold. This observation and the following argument was due to Aleksandar Milivojevi\'{c} and was communicated to the author in 2019. Let $F$ denote the homotopy fiber of the map $\overline{c}: BU\to BU_0$, the exact sequence below shows that $PB$ is simply connected if and only if the map $\pi_2(X)\to \pi_2(BU_0)$ is surjective, which happens if and only if the input class $c_1\in H^2(X;\Q)$ is nonzero.\\

\begin{small}$\xymatrix{
\ar[r] &\pi_2(PB) \ar[d] \ar[r] & \pi_2(X) \ar[d] \ar[r] &\pi_1(F)\ar[d] \ar[r] & \pi_1(PB)\ar[d] \ar[r] &\pi_1(X)=0\ar[d]  \\
\ar[r] &\pi_2(BU)  \ar[r] & \pi_2(BU_{0}) \ar[r] &\pi_1(F) \ar[r] & \pi_1(BU) \ar[r] &\pi_1(BU_{0})
}$

$\hspace{1.6cm}\Z\ \ \ \ \ \ \ \ \ \ \ \ \ \ \ \ \ \Q\ \ \ \ \ \ \ \ \ \ \ \ \ \ \ \Q/\Z \ \ \ \ \ \ \ \ \ \ \ \ \ \ 0$\\

\end{small}

The proof is then divided into two cases. If $c_1$ is nonzero, we show that the above pullback diagram with classifying space $BU$ yields a normal map $f:M\to X$ where $M$ is a stably almost complex manifold, the existence of a degree 1 normal map is guaranteed by condition (iii). If $c_1$ is zero, the classifying space $BU$ is replaced by $BSU$ in the pullback diagram, which yields a normal map $f:M\to X$ where $M$ is a stably almost complex manifold with $c_1(\tau_M)=0$, in this case, condition (iii) and (iv) Case 2 are sufficient to produce a degree 1 normal map.

{\it Case} 1. If $c_1\neq 0$, the pullback space $PB$ is simply-connected. The generalized Thom-Pontryagin construction (\cite[page 264--266]{Lashof63} and \cite[page 18--23]{Stong68}) produces a surgery normal map as follows. Let $\alpha$ be any class in $\pi_{n+2m}(T\xi^m_{\C})$, for the composite $S^{n+2m}\xrightarrow{\alpha} T\xi^m_{\C}\xrightarrow{Tpr_2} T\gamma_{\C}^m$, since $S^{n+2m}$ is compact, the image $Tpr_2\circ\alpha(S^{n+2m})$ lives in $T\gamma^m(\C^{r})$ for some finite $r$. By the transversality theorem, $Tpr_2\circ \alpha$ can be deformed to a map $h$ that is differentiable in the pre-image of some open neighborhood of $Gr_m({\C}^r)\subset T\gamma^m(\C^{r})$ and is transverse regular on $Gr_m({\C}^r)$. Then we obtain a manifold $M^{n}=h^{-1}(\text{Gr}_m({\C}^r))$ and $h |_{M}$ is the classifying map of the stable normal bundle $\nu_M$. A complex structure on the stable normal bundle of $M$ determines a complex structure on the stable tangent bundle of $M$ up to homotopy, so $M$ is a stably almost complex manifold.  By the homotopy lifting property, the deformation of $Tpr_2\circ \alpha$ to $h$ can be covered by a homotopy of $\alpha$ to a new map $g$ such that $h=Tpr_2\circ g$, then $M=g^{-1}(PB)=h^{-1}(Gr_m({\C}^r))$. We have obtained a normal map $(g, \widehat{g}): (M, \nu_M)\to (PB, \xi)$. Chasing the diagram, for the map $f:=pr_1\circ g: M\rightarrow X$, $f^*(c_i)=c_i(\tau_{M}) \in H^{2i}(M;\Q)$, which is the rational Chern class of the stable tangent bundle of $M$.

The existence of a degree 1 normal map in the sense that $f_*[M]=\mu$ is guaranteed by condition (iii). We outline a proof similar to the argument given in \cite[Lemma 3.2.2.]{Su09} and \cite{Su14} for the theorem on smooth manifold. If the numbers $\langle c_I, \mu\rangle$ agree with the set of Chern numbers of an almost complex manifold, i.e., there exists a $n$--dimensional almost complex manifold $N$ such that for all partitions $I$ of $2k$,
$$\langle c_I(\tau_N),[N]\rangle=\langle c_I,\mu\rangle=\langle c_I(\gamma^m_{\C}),c_*\mu\rangle=\langle\overline{c}_I(\gamma^m_{\C}),{\overline{c}(\gamma)}_*^{-1}(c_*\mu)\rangle.$$
This identity implies that $\langle c_I(\nu_N),[N]\rangle=\langle
c_I(\gamma^m_{\C}),{\overline{c}(\gamma)}_*^{-1}(c_*\mu)\rangle$, 
therefore the rational homology class ${\overline{c}(\gamma)}_*^{-1}(c_*\mu)$ lies in the image of the homomorphism $\nu: \Omega_n^{U}\rightarrow H_n(BU;\mathbb{Q})$ defined by $\nu([M])=(\nu_M)_*([M])$. Since the homomorphism $\nu$ is obtainable as the composition $\Omega_n^{U}\cong \pi_{n+2m}(T\gamma^m)\xrightarrow{h}H_{n+2m}(T\gamma^m)\xrightarrow{\cap U} H_n(BU;\Z)\to H_n(BU;\Q)$, which is exactly the diagonal map in the upper right corner of the digram below, there must exist a homotopy class $\beta\in \pi_{n+2m}(T\gamma^m)=\pi_{n+2m }(MU(m))$ mapping to $c_*\mu\in H_n(BU(m)_0;\Q)$. Note that the diagonal maps in the lower left and lower right corner of the diagram are isomorphisms, this is because both the Thom space of the rational spherical fibration and the base space are $\Q$--local (rational), and the Hurewicz map on the Thom space is a rational isomorphism as we assumed $m>>n$. It can be shown that the outer square of Thom spaces is a homotopy Cartesian square (see \cite[Lemma 6.1]{TaylorWilliams}, \cite[Theorem 2.3]{Quillen69}, or a detailed proof in \cite[Lemma 3.2.3]{Su09}). Then the existence of homotopy classes $\beta\in  \pi_{n+2m}(T\gamma^m)$ and $c_X\in\pi_{n+2m}(T\widetilde{\nu}_X)$ both mapping to $c_*\mu$ implies the existence of a desired Spivak class $\alpha\in\pi_{n+2m}(T\xi^m_{\C})$ in the upper left corner mapping to $\mu$. Therefore $f_*[M]=\mu$.

\begin{small}
\[
\xymatrix @R=6mm @C=3.5mm {
\alpha\in\pi_{n+2m}(T\xi^m_{\C}) \ar[ddd]^{{Tpr_1}_*}  \ar[dr]^{} \ar[rrr]^{{T_{pr_2}}_*} & & &  \pi_{n+2m}(T\gamma^m_{\C})\ni \beta \ar[dl]^{\nu} \ar[ddd]^{T\overline{c}(\gamma)_*}\\
& H_n(PB) \ar[d]^{{pr_1}_*} \ar[r]^-{{pr_2}_*}  & H_n(BU(m)) \ar[d]^{\overline{c}(\gamma)_*} \\
& \mu\in H_n(X) \ar[r]^-{c_*}  &  H_n(BU(m)_{0}) \\
c_X\in\pi_{n+2m}(T\widetilde{\nu}_X) \ar[ur] ^{\cong}\ar[rrr]^{{Tc}_*}  & & &
\pi_{n+2m}(T\gamma^m_{0})  \ar[ul]_{\cong} }\]\\
\end{small}

{\it Case} 2. If $c_1= 0$, replace the classifying space $BU$ by $BSU$, the input map becomes $c: X \xrightarrow{(c_1=0, c_2,\ldots,c_{2k}, 0,\ldots)} \prod_{i=2}^m K(\mathbb{Q},2i)\simeq BSU(m)_{0}$. Since $\pi_2(BSU)=0$, the pullback space $PB$ is simply-connected. Let $\eta_{\C}^m$ denote the universal bundle over $BSU(m)$. 
\begin{small}\[\xymatrix @R=8mm @C=3.5mm {
\nu_{M}^m \ar[d] \ar[rr]^{\widehat{g}}  && \xi^m_{\C} \ar[d] \ar[r]^{\widehat{pr_2}} & \eta^m_{\C} \ar[d] \ar[r]& \gamma^m_{\C}\ar[d] \\  
M \ar[rrd]_f \ar[rr]^{g} &&PB \ar[d]^{pr_1} \ar[r]^-{pr_2} & BSU(m)
\ar[d(0.88)]^{\overline{c}(\gamma)}  \ar[r]& BU(m)\\
 && X \ar[r]^-{c} &\prod\limits_{i=2}^m K(\mathbb{Q},2i)\simeq BSU(m)_{0} 
& }\]\end{small}

Similar to the argument in {\it Case} 1, the Thom-Pontryagin construction produces a normal map $(g, \widehat{g}): (M, \nu_M)\to (PB, \xi)$ such that $M$ is a stably almost complex manifold and $f^*(c_i)=c_i(\tau_M)\in H^{2i}(M;\Q)$, in particular, $c_1(\tau_M)=f^*(c_1)=0$. If the numbers $\langle c_I, \mu\rangle$ agree with the set of Chern numbers of a $SU$ manifold, there exists a homotopy class $\beta\in \pi_{n+2m}(T\eta^m)=\pi_{n+2m }(MSU(m))\cong \Omega_n^{SU}$ mapping to $c_*\mu\in H_n(BSU(m)_0;\Q)$, this guarantees the existence of a Spivak class $\alpha\in\pi_{n+2m}(T\eta^m_{\C})$ mapping to $\mu$, which implies that $f_*[M]=\mu$.  

We elaborate on why the numbers $\langle c_I,\mu\rangle$ agree with the set of Chern numbers of an almost complex ($U$) or $SU$ manifold if and only if they satisfy the integrality relations phrased as \eqref{BUrelation} and \eqref{BSUrelation4mod8} in condition (iii) and (iv). For $G=U$ or $SU$, since $H^*(BG;\Q)$ is the rational polynomial algebra on the universal Chern classes and $H_n(BG;\Q)\cong Hom(H^n(BG;\Q),\Q)$, the integrality relations among Chern numbers of $G$ manifold characterize the group $\tau\Omega^{G}_n$, which is the image of the $G$ cobordism group under the homomorphism $\tau: \Omega_n^{G}\to H_n(BG; \Q)$ induced by the stable tangent bundle $\tau_M: M\to BG$. 

As stated in \cite[Theorem 1]{Stong65I}, $\tau\Omega^U_n=\{x \in H_n(BU;\Q)\,|\, \langle \Z[e^c_1, e^c_2, \ldots] \mathord{\cdot} \td, x\rangle\in \Z\}$, therefore the $U$ relations are phrased as \eqref{BUrelation} in (iii). 

In \cite[Theorem 1]{Stong65II}, \cite[Page 259]{Stong68}, the $SU$ relations are described in terms of the $s_{\omega}$--symmetric polynomials, since the $s_{\omega}$--symmetric polynomials form an additive basis for the ring of formal power series of elementary symmetric polynomials. It is equivalent to phrase the relations  in terms of  the elementary symmetric polynomials as follows.

If $n\not\equiv 4 \pmod{8}$,   $\tau\Omega^{SU}_{n}=\{x \in H_n(BSU;\Q)\,|\, \langle \Z[e^c_1, e^c_2, \ldots] \mathord{\cdot} \td, x\rangle\in \Z\},$ i.e., the $SU$ relations agree with the $U$ relations. Therefore when $c_1=0$ and $n\not\equiv 4 \pmod{8}$, the relations \eqref{BUrelation} in condition (iii) is sufficient to produce a desired normal map.

If $n\equiv 4 \pmod{8}$, $\tau\Omega^{SU}_{n}\!\!\!=\!\!\{x\! \in \!H_n(BSU;\Q)\,|\, \langle \Z[e^c_1, e^c_2, \ldots] \mathord{\cdot} \td, x\rangle\!\in \!\Z \textup{ and } \langle \Z[e^{p(c)}_1\!\!,  e^{p(c)}_2\!\!\!, \ldots] \mathord{\cdot} \AP, x\rangle\!\!\in \!\!2\Z\}.$ Therefore in (iv) Case 2, the additional relation \eqref{BSUrelation4mod8} is required.

The next step is to perform surgery on the candidate degree 1 normal map, condition (i) and (ii) guarantee a vanishing surgery obstruction so that a rational homotopy equivalence can be obtained.  The proof outlined in the original theorem \cite[Page 326]{Sullivan77} on smooth manifold still applies in the almost complex case. The book \cite{Anderson77} discussed surgery theory with $\Z_P$ coefficients ($\Z_P=\Q$ if $P$ is the set of all primes). One can also refer to Milivojevi\'{c}'s PhD thesis \cite{Mil21} for an exposition on rational surgery. 
The candidate normal map $(g, \widehat{g}): (M, \nu_M)\to (PB, \xi)$ is normally cobordant to a rational homotopy equivalence if and only if the obstruction vanishes in the $L$ group 
$L_{4k}(\Q)\cong \Z\oplus \bigoplus_{\infty}\Z_2 \oplus \bigoplus_{\infty}\Z_4$. Condition (i) guarantees that the intersection form $(H^{2k}(X;\Q),\lambda_{\mu})$ is contained in the image of the map $W(\Z)\to W(\Q)$, and therefore has a vanishing $\Z_2$ and $\Z_4$ summands in $L_{4k}(\Q)$. Since $\langle \LP_k(p_i(c)),\mu\rangle=\langle  \LP_k(p_i(c)),f_*([M])\rangle=\langle \LP_k(p_i(f^*c)),[M]\rangle=\langle \LP_k(p_i(c(\tau_M))),[M]\rangle=\sigma(M)$. The signature equation in (ii) implies that $\sigma(M)=\sigma(H^{2k}(X;\Q),\lambda_{\mu})$, so the obstruction has a vanishing $\Z$ summand in $L_{4k}(\Q)$. After a sequence of surgery performed, the resulting normal map, again denoted $(g, \widehat{g}): (M, \nu_M)\to (PB, \xi)$, still gives a complex structure on the normal bundle of $M$, the map $f: M\to X$ is a desired rational homotopy equivalence from a simply-connected stably almost complex manifold. 

Condition (v) ensures that $\langle c_{2k}(\tau_M),[M]\rangle=\langle f^*(c_{2k}),[M]\rangle  =\langle c_{2k},\mu\rangle=\chi(H^*)=\chi(X)$, which is now equal to $\chi(M)$ as $f: M\to X$ is a rational homotopy equivalence, then by \cite[Theorem 1.7]{Thomas67} and \cite[Theorem 1.1]{Sutherland65}, the $2k$--Chern number of $M$ agrees with its Euler class implies that $M$ admits an almost complex structure.

This finished the proof that conditions (i)-(v) are sufficient for the existence of a simply-connected almost complex manifold $M$ and a rational homotopy equivalence $f:M\to X$ such that $f^*(c_i)=c_i(\tau_M)$ and $f_*[M]=\mu$, and therefore $M$ realizes the prescribed rational cohomology ring $H^*$ and the Chern numbers $\langle c_I(\tau_M),[M]\rangle=\langle c_I, \mu\rangle$ for all partitions $I$ of $2k$. Note that the first rational Chern class $c_1(\tau_M)=f^*(c_1)=0$ if and only if the input class $c_1=0$. Since the realizing manifold $M$ is simply-connected, $H^2(M;\Z)$ is free, so the integral Chern class of $M$ is zero if and only if the input class $c_1=0$.  
\end{proof}

\section{Integrality relations among Chern numbers.}
\label{sec:3}

In order to apply Theorem \ref{acthm} in our prescribed Betti number setting, we explicitly calculate the integrality relations \eqref{BUrelation} and \eqref{BSUrelation4mod8} under the assumption that  all the Chern classes $c_i$ vanish except possibly for the middle and top dimensional classes $c_k$ and $c_{2k}$.

\begin{proposition}\label{U-relation-thm}

If the product of Chern classes $c_{\omega}=0$ except possibly for $c_{k}$, $c_k^2$, and $c_{2k}$, the integrality relations among Chern numbers of $n=4k (k>1)$--dimensional almost complex manifolds, expressed as \eqref{BUrelation}
\begin{equation*}\langle \Z[e^c_1, e^c_2, \ldots] \mathord{\cdot} \td, \mu\rangle\in \Z,\end{equation*}  are satisfied if and only if the following sub-relations are satisfied.
\begin{subnumcases}
\ \langle \td, \mu\rangle=\frac{1}{2}(t_k^2-t_{2k})\langle c_k^2,\mu\rangle+t_{2k}\langle c_{2k},\mu\rangle\in\Z   \ \ {\scriptstyle \textup{with}\ \   t_k=\frac{B_k}{k!}}\label{U-relation-thm-4k-Td}\\ 
\langle e_1^c\mathord{\cdot} \td, \mu\rangle=\left((-1)^{k+1}\df{t_{k}}{(k-1)!}+\df{1}{2(2k-1)!}\right)\langle c_k^2,\mu\rangle -\df{1}{(2k-1)!}\langle c_{2k},\mu\rangle\in\Z  \label{U-relation-thm-4k-e1}\\
\langle e_1^ce_1^c\mathord{\cdot} \td, \mu\rangle=\df{1}{[(k-1)!]^2}\langle c_k^2,\mu\rangle\in\delta_k\Z  \ \ \textup{with}  \ \delta_k=\left\{
                            \begin{array}{ll}
                              1  & k=2\\
                              2& k>2
                            \end{array}
                          \right.  \label{U-relation-thm-4k-e1e1}
\end{subnumcases}
\end{proposition}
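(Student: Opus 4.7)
The plan is to reduce \eqref{BUrelation} to an explicit finite calculation in the graded quotient ring $R := \Q[c_k, c_{2k}]/(c_k^3,\, c_k c_{2k},\, c_{2k}^2)$. Because the hypothesis kills every Chern-class product of total degree $\leq 2k$ except $c_k$, $c_k^2$, and $c_{2k}$, the pairing $\langle -, \mu\rangle$ factors through the ring homomorphism $\phi\colon \Q[c_1,c_2,\ldots] \to R$ sending $c_i \mapsto 0$ for $i \notin \{k,2k\}$. Setting $a := \langle c_k^2,\mu\rangle$ and $b := \langle c_{2k},\mu\rangle$, each pairing $\ell_P := \langle P \cdot \td, \mu\rangle$ becomes a $\Q$-linear form in $(a,b)$, and \eqref{BUrelation} demands $\ell_P \in \Z$ for every $P \in \Z[e_1^c, e_2^c, \ldots]$.

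First I would compute $\phi(\td)$ by writing $\td = \exp\bigl(\sum_j a_j\, p_j\bigr)$ with $a_j$ the Taylor coefficients of $\log(x/(1-e^{-x}))$. Newton's identities force $p_m = 0$ under the reduction for $m \neq k, 2k$, while $p_k = (-1)^{k-1} k\, c_k$ and $p_{2k} = k c_k^2 - 2k c_{2k}$. Since any product of three or more positive-degree elements vanishes in $R$, the exponential truncates at the quadratic term, yielding $\phi(\td) = 1 + t_k c_k + \tfrac12(t_k^2 - t_{2k}) c_k^2 + t_{2k}\, c_{2k}$, which proves \eqref{U-relation-thm-4k-Td}. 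The Stirling-number expansion $(e^x - 1)^l = l!\sum_{n \geq l} S(n,l)\, x^n/n!$ together with the generating function $\sum_l e_l^c u^l = \exp\bigl(\sum_j (-1)^{j-1} p_j^c u^j/j\bigr)$ gives explicit formulas for $\phi(e_l^c)$; specializing $l=1$ and multiplying by $\phi(\td)$ yields \eqref{U-relation-thm-4k-e1}, while squaring kills the mixed and $c_{2k}$ terms by the defining relations of $R$ to give $\phi((e_1^c)^2) = \tfrac{1}{[(k-1)!]^2} c_k^2$, the right-hand side of \eqref{U-relation-thm-4k-e1e1}.

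The central finiteness observation is that any monomial $M = \prod(e_l^c)^{a_l}$ has image $\phi(M)$ of one of only four kinds: $1$, $u_l$, $u_l u_{l'}$, or $u_l^2$, where $u_l := \phi(e_l^c)$. Indeed $u_l^{a_l} = 0$ for $a_l \geq 3$ since $u_l \cdot v = 0$ for every $v \in R_{2k}$, and triple products across distinct indices vanish for the same reason. Writing $u_l = \gamma_l c_k + \alpha_l c_k^2 + \beta_l c_{2k}$, the pair and square forms $\ell_{u_l u_{l'}} = \gamma_l \gamma_{l'}\, a$ and $\ell_{u_l^2} = \gamma_l^2\, a$ lie purely in the $a$-direction with coefficient in $\tfrac{1}{[(k-1)!]^2}\Z$, so each is already an integer multiple of $\langle (e_1^c)^2 \cdot \td,\mu\rangle/\delta_k$. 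The single-factor forms $\ell_{u_l}$ for $l \geq 2$ mix both directions and are rewritten as integer combinations of the three distinguished pairings using Newton's identities relating $e_l^c$ to the $p_j^c$'s.

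The main obstacle will be justifying the factor $\delta_k$. Using $e_2^c = \tfrac12((e_1^c)^2 - p_2^c)$ one computes
\[
\ell_{e_2^c} \;=\; \tfrac{1-n}{2}\,\ell_{(e_1^c)^2} - (2^{2k-1}-1)\,\ell_{e_1^c},\qquad n := (-1)^k\,\tfrac{2^k(2^k-1)B_k}{k}.
\]
For $k=2$, $n = 1$ gives $(1-n)/2 = 0$, so $\ell_{e_2^c}$ adds no refinement and $\delta_2 = 1$. For $k > 2$ the number $n$ is always even: for $k$ odd, $n = 0$; for $k$ even, $n$ is (up to sign) the tangent number $T_{k/2}$, which is even for $k \geq 4$. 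Hence $1-n$ is odd, and combined with $\ell_{(e_1^c)^2} \in \Z$, integrality of $\ell_{e_2^c}$ forces $\tfrac12\,\ell_{(e_1^c)^2} \in \Z$, establishing the sharper $\delta_k = 2$.
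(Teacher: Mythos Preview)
Your plan follows the paper's proof closely: both reduce to monomials of degree $\leq 2$ in the $e_l^c$, compute $\td$, $e_1^c$, $(e_1^c)^2$ explicitly via power sums, show every $\langle e_l^c e_m^c\cdot\td,\mu\rangle$ is an integer multiple of $\langle (e_1^c)^2\cdot\td,\mu\rangle$ (because the $c_k$--coefficient $\gamma_l$ of $e_l^c$ lies in $\tfrac{1}{(k-1)!}\Z$), and isolate $\delta_k$ from the $e_2^c$ relation. Your formula $\ell_{e_2^c}=\tfrac{1-n}{2}\ell_{(e_1^c)^2}-(2^{2k-1}-1)\ell_{e_1^c}$ with $n=(-1)^k 2^k(2^k-1)B_k/k$ is correct, and the tangent-number parity argument for $n$ handles the converse direction cleanly; the paper reaches the same conclusion by invoking a Lipschitz--Sylvester divisibility at $l=2$, so at that point the two arguments are equivalent.

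The genuine gap is in the forward direction (three sub-relations $\Rightarrow$ all of \eqref{BUrelation}) for $l\geq 3$. Your assertion that $\ell_{e_l^c}$ can be ``rewritten as integer combinations of the three distinguished pairings using Newton's identities'' is not what Newton's identities deliver. In the quotient ring one has $e_l^c=\tfrac{(-1)^{l+1}}{l}S_l^c+\tfrac{1}{2}\sum_{i=1}^{l-1}e_i^ce_{l-i}^c$, and pairing with $\td$ gives
\[
\ell_{e_l^c}\;=\;\pm\tfrac{C_l(2k)}{l}\,\ell_{e_1^c}\;\pm\;\tfrac{[C_l(k)-C_l(2k)]\,B_k}{l\,k}\,\ell_{(e_1^c)^2}\;+\;\tfrac{1}{2}\sum_{i=1}^{l-1}\ell_{e_i^ce_{l-i}^c}.
\]
The first coefficient is an integer (Stirling), and the last sum is absorbed by $\ell_{(e_1^c)^2}\in 2\Z$ when $k>2$. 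But the middle coefficient unpacks as a signed sum of terms $(l-j)^{k-1}\bigl[(l-j)^k-1\bigr]B_k/k$, and their integrality for arbitrary $l-j$ is precisely what the paper extracts from the strengthened Lipschitz--Sylvester theorem $a^{\lfloor\log_2 m\rfloor+1}(a^m-1)B_m/m\in\Z$. Your tangent-number argument is the special case $a=2$ and does not extend to general $a=l-j$. Finally, for $k=2$ there is no factor of $2$ from $\delta_k$ to absorb the $\tfrac{1}{2}$ in the last term; the paper treats $l=2,3,4$ by direct computation, and your sketch does not address this case either.
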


\begin{proof}
The proof is similar to the one given in \cite[Lemma 2, 3, 4]{KS17}. Since $c_{\omega}=0$ except possibly for $c_{k}$, $c_k^2$, and $c_{2k}$, the total Todd class is $\td=1+\td_{k}+\td_{2k}=1+t_{k}c_{k}+(t_{k,k}c_{k}^2+t_{2k}c_{2k})$ where $t_{k}=B_{k}/k!$ and  $t_{k,k}=\frac{1}{2}(t_{k}^2-t_{2k})$.  

In our notation, $e_l^{c}$ is the $l$--th elementary symmetric polynomial of the variables $e^{x_i}-1=\sum_{m=1}^\infty \frac{x_i^{m}}{m!}$. Since each $e_l^{c}$ class can be expressed as a linear combination of $c_k$, $c_k^2$ and $c_{2k}$ with rational coefficients, any monomial $e_{\omega}^c=0$ if the degree $|\omega|>2$, so  \eqref{BUrelation} holds true if and only if $\langle \td, \mu\rangle\in\Z$, $\langle e_l^c\mathord{\cdot} \td, \mu\rangle\in\Z$, and $\langle e_l^ce_m^c\mathord{\cdot} \td, \mu\rangle\in\Z$ for all $l, m\geq 1$, we will show the later two conditions hold true if and only if \eqref{U-relation-thm-4k-e1} and \eqref{U-relation-thm-4k-e1e1} hold true. An explicit formula for the $e_l^c$ class will be derived via the power sum symmetric polynomial.

Let $S_m(x_i)$ denote the $m$-th power sum of the variables $x_i$, i.e., $S_m(x_i)=\sum_i x_i^m$. Apply the Newton-Girard formula, the power sum symmetric polynomial $S_m(x_i)$ can be written in terms of the elementary symmetric polynomials $c_j=\sigma_j(x_i)$,
\begin{small} 
\begin{equation}\label{Sx}
S_m(x_i)=\begin{cases} 
(-1)^{k+1}kc_k & \text{if } m=k \\
  k(c_k^2-2c_{2k}) & \text{if } m=2k\\
  0 & \text{otherwise}
  \end{cases}.
\end{equation}
\end{small}
Then we can write
\begin{small}
	\begin{eqnarray}\label{S1e1}
	e_1^c= \ds_{i} e^{x_i}-1 =\sum_i  \sum_{m=1}^\infty \frac{ x_i^m}{m!} &=& \sum_{m=1}^\infty \frac{1}{m!} S_m(x_i)\nonumber\\
	                  &=& \df{1}{k!}S_k(x_i)+\df{1}{(2k)!}S_{2k}(x_i)\nonumber\\
	                  &\stackrel{}{=}& \df{(-1)^{k+1}}{(k-1)!}c_k+\df{1}{2(2k-1)!}(c_k^2-2c_{2k}).
	\end{eqnarray}	
\end{small}
Let $S_l^c$ denote the $l$-th power sum of the variables $e^{x_i}-1$.  Let $\{-\}_{m}$ denote the degree $m$ terms of the variable $x_i$'s in an expression. Let $C_l(m):=\sum_{j=0}^{l-1} (-1)^j \binom{l}{j} (l-j)^{m}$.
\begin{small}
 \begin{eqnarray}\label{Sl-e1}
\{S_{l}^c\}_m=\left\{\ds_i(e^{x_i}-1)^l\right\}_m
	&=&\left\{\ds_{i}\ds_{j=0}^{l}(-1)^j\binom{l}{j}e^{x_i(l-j)}\right\}_m\nonumber\\
	&=&\ds_{i}\ds_{j=0}^{l}(-1)^j\binom{l}{j}\df{x_i^{m}(l-j)^{m}}{m!}\nonumber\\
	&=&\left[\sum_{j=0}^{l-1} (-1)^j \binom{l}{j} (l-j)^{m}\right]\frac{S_m(x_i)}{m!}= C_l(m)\{e_1^c\}_m.\nonumber
\end{eqnarray}
\end{small}
Note that $C_l(m)=l!S(m,l)$ where $S(m,l)$ is the Stirling number of the second kind. If $m<l$, $C_l(m)=0$. By \eqref{Sl-e1} and \eqref{S1e1},\begin{eqnarray}S_l^c=\{S_l^c\}_k + \{S_l^c\}_{2k}	&=& C_l(k)\{e_1^c\}_k + C_l(2k)\{e_1^c\}_{2k}\label{Sl1}\\
&=&C_l(2k)\left(\{e_1^c\}_k+\{e_1^c\}_{2k}\right) + \left[C_l(k)-C_l(2k)\right]\{e_1^c\}_k\nonumber\\
&=&C_l(2k)e_1^c+[C_l(k)-C_l(2k)]\df{(-1)^{k+1}}{(k-1)!}\,c_k. \label{Sl2}
\end{eqnarray}
Apply Newton-Girard again to relate the power sum symmetric polynomial $S_l^c=S_l^c(e^{x_i}-1)$ with the elementary symmetric polynomials $e_j^c=\sigma_j(e^{x_i}-1)$, since any monomial $e_{\omega}^c=0$ if the degree $|\omega|>2$, we can solve that\begin{small}
\begin{eqnarray}\label{el}e_l^c&=&\df{(-1)^{l+1}}{l}S_l^c+\df{1}{2}\ds_{i=1}^{l-1}e_i^ce_{l-i}^c.
\end{eqnarray}
\end{small}
Multiply $e_l^c$ with the total Todd class $\td=1+\td_{k}+\td_{2k}=1+\df{B_k}{k!}c_k+\td_{4k}$, then evaluate on the fundamental class $\mu$, by \eqref{el} and \eqref{Sl2},
\begin{small}
\begin{equation}\label{elT}\langle e_l^c\mathord{\cdot}\td,\mu\rangle=(-1)^{l+1}\underbrace{\df{C_l(2k)}{l}\langle e_1^c\mathord{\cdot}\td,\mu\rangle}_{(a)}
+(-1)^{l+k}\underbrace{{\df{[C_l(k)-C_l(2k)]}{l(k-1)!}\langle c_k\mathord{\cdot}\td,\mu\rangle}}_{(b)}
+\df{1}{2}\underbrace{\ds_{i=1}^{l-1}\langle e_i^ce_{l-i}^c\mathord{\cdot}\td,\mu\rangle}_{(c)}.
\end{equation}
\end{small}
By \eqref{el} and \eqref{Sl1},
\begin{eqnarray}\langle e_l^ce_m^c\mathord{\cdot}\td, \mu\rangle&=&(-1)^{l+m}\df{C_l(k)}{l}\df{C_m(k)}{m}\langle e_1^ce_1^c\mathord{\cdot}\td, \mu\rangle.
\end{eqnarray}

If we assume $\langle e_1^c\mathord{\cdot}\td,\mu\rangle\in\Z$ and $\langle e_1^ce_1^c\mathord{\cdot}\td,\mu\rangle\in2\Z$, then $\langle e_l^ce_m^c\mathord{\cdot}\td, \mu\rangle\in\Z$ because $C_i(k)/i$ is an integer for any $i$. In the expression of $\langle e_l^c\mathord{\cdot}\td,\mu\rangle$, $(a)\in\Z$ for the same reason. For $(b)$, since $\langle e_1^ce_1^c\mathord{\cdot}\td,\mu\rangle=\df{1}{[(k-1)!]^2}\langle c_k^2,\mu\rangle$, $\langle c_k\mathord{\cdot}\td,\mu\rangle=\df{B_{k}}{k!}[(k-1)!]^2\langle e_1^ce_1^c\mathord{\cdot}\td,\mu\rangle$,
\begin{small}
\begin{eqnarray}\label{elTb}
(b)&=&\df{[C_l(k)-C_l(2k)]}{l}\df{B_{k}}{k}\langle e_1^ce_1^c\mathord{\cdot}\td,\mu\rangle\\
&=&\sum_{j=0}^{l-1} (-1)^{j+1} \underbrace{\frac{1}{l}\binom{l}{j} (l-j)}_{(b)_1}  \underbrace{(l-j)^{k-1}\left[(l-j)^{k}-1\right]\df{B_{k}}{k}}_{(b)_2}\langle e_1^ce_1^c\mathord{\cdot}\td,\mu\rangle,\nonumber
\end{eqnarray}
\end{small}
so $(b)_1\in\Z$ from the fact that $a/\gcd(a,b)$ divides $\binom{a}{b}$ for any integer $a,b$. By a stronger version of the Lipschitz-Sylvester theorem that $a^{\lfloor \log_2 m\rfloor+1}(a^m-1)B_m/m$ is an integer for any $a\in\Z$ (\cite{Sla95}), the expression $(b)_2\in\Z$ if $k>2$. $(c)\in 2\Z$ because $\langle e_i^ce_{l-i}^c\mathord{\cdot}\td,\mu\rangle=(-1)^{l}\frac{C_i(k)}{i}\frac{C_{l-i}(k)}{l-i}\langle e_1^ce_1^c\mathord{\cdot}\td, \mu\rangle\in2\Z$ for $i=1,2\ldots l-1$. Therefore the entire expression of $\langle e_l^c\mathord{\cdot}\td,\mu\rangle\in\Z$.

If we assume $\langle e_l^c\mathord{\cdot} \td, \mu\rangle\in\Z$ and $\langle e_l^ce_m^c\mathord{\cdot} \td, \mu\rangle\in\Z $ for all $l,m\geq 1$, in the expression for 
$\langle e_2^c\mathord{\cdot}\td,\mu\rangle$, since $(a)\in\Z$, and $(b)\in\Z$ when $k>2$, $(c)/2$ must be an integer, hence $\langle e_1^ce_1^c\mathord{\cdot} \td, \mu\rangle\in2\Z$ when $k>2$.

For $k=2$,  i.e., the $8$--dimensional case, the only nontrivial $e_l^c$ classes are $e_1, e_2, e_3, e_4$. It suffices to show that $\langle e_1^c\mathord{\cdot} \td, \mu\rangle\in\Z$ and $\langle e_1^ce_1^c\mathord{\cdot} \td, \mu\rangle\in\Z$ together implies $\langle e_l^c\mathord{\cdot} \td, \mu\rangle\in\Z$ for $l=2,3,4$. By \eqref{elT}, \eqref{elTb}, \eqref{S1e1}, $\td=1+\frac{c_2}{12}+\td_4$, $C_l(2)=2, 0, 0$ and $C_l(4)=14, 36, 24$ for $l=2, 3,  4$, one can compute that
\begin{eqnarray*}
&&\langle e_1^c\mathord{\cdot} \td, \mu\rangle=\langle -\frac{c_4}{6}, \mu\rangle,  \ \ \ \  \langle e_1^ce_1^c\mathord{\cdot} \td, \mu\rangle=\langle c_2^2, \mu\rangle, \\
&&\langle e_2^c\mathord{\cdot} \td, \mu\rangle=\langle \frac{7c_4}{6}, \mu\rangle,\ \ \langle e_3^c\mathord{\cdot} \td, \mu\rangle=\langle -2c_4, \mu\rangle,\ \ \langle e_4^c\mathord{\cdot} \td, \mu\rangle=\langle c_4, \mu\rangle.
\end{eqnarray*}
Therefore the statement is true for $k=2$. Alternatively, one can verify that when $k=2$, in the expression \eqref{elT} (and \eqref{elTb}) of $\langle e_l^c\mathord{\cdot} \td, \mu\rangle$, $(b)+\df{1}{2}(c)=0$ for $l=2,3,4$, therefore $\langle e_1^c\mathord{\cdot} \td, \mu\rangle\in\Z$ itself already guarantees $\langle e_l^c\mathord{\cdot} \td, \mu\rangle\in\Z$ for all $l>1$ when $k=2$.

\end{proof}

\begin{proposition} \label{SU-relation-thm} If the product of Chern  classes $c_{\omega}=0$ except possibly for $c_{k}$, $c_k^2$, and $c_{2k}$,  the additional integrality relations \textup{(}besides \eqref{BUrelation}\textup{)} among Chern numbers of $SU$ manifolds of dimension $n=4k (k>1)$ with $k$ odd, which is expressed as \eqref{BSUrelation4mod8}
\begin{equation*}\langle\Z[e^{p(c)}_1, e^{p(c)}_2, \ldots] \mathord{\cdot} \AP, \mu\rangle\in 2\Z, 
\end{equation*} 
are satisfied if the following sub-relations are satisfied. \begin{subnumcases}
\ \langle \td, \mu\rangle=\frac{1}{2}t_{2k}[-\langle c_k^2,\mu\rangle+2\langle c_{2k},\mu\rangle]\in2\Z   \ \ {\scriptstyle \textup{with}\ \   t_{2k}=\frac{B_{2k}}{(2k)!}}\label{SU-relation-thm-Td-8k+4}\\ 
\langle e_1^c\mathord{\cdot} \td, \mu\rangle=\left(\df{1}{2(2k-1)!}\right)\langle c_k^2,\mu\rangle -\df{1}{(2k-1)!}\langle c_{2k},\mu\rangle\in\Z  \label{SU-relation-thm-e1-8k+4}
\end{subnumcases}

\end{proposition}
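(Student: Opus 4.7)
The plan is to parallel the proof of Proposition \ref{U-relation-thm}, exploiting two simplifications specific to the $SU$ setting with $k>1$ odd. First, since $B_k=0$ for odd $k\geq 3$, we have $t_k=B_k/k!=0$, and the class $c_1$ vanishes because it is not in our allowed set $\{1,c_k,c_k^2,c_{2k}\}$; combining $c_1=0$ with the standard identity $\td=\AP\cdot e^{c_1/2}$ collapses $\AP$ to $\td$, so \eqref{SU-relation-thm-Td-8k+4} is precisely $\langle\AP,\mu\rangle\in 2\Z$. Second, the conversion $p_i(c)=(-1)^i\sum_j(-1)^j c_j c_{2i-j}$ under our assumption yields $p_k(c)=c_k^2-2c_{2k}$ as the only nonvanishing Pontryagin class, and $p_k(c)^2=0$ because $c_k^4, c_k^2 c_{2k}, c_{2k}^2$ all lie in cohomological degrees exceeding $n$. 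The relevant quotient of the Pontryagin polynomial ring is thus $\Q\oplus\Q\cdot p_k(c)$.

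Any product $e_{l_1}^{p(c)}\cdots e_{l_r}^{p(c)}$ with $r\geq 2$ vanishes in this quotient, since each factor has no constant term in the $p_i$'s, so every monomial of the product has $p$-degree at least $r\geq 2$ and is killed by the relations $p_i=0$ ($i\neq k$) and $p_k^2=0$. Therefore \eqref{BSUrelation4mod8} reduces to verifying $\langle\AP,\mu\rangle\in 2\Z$ together with $\langle e_l^{p(c)}\AP,\mu\rangle\in 2\Z$ for each $l\geq 1$, the degree-$4k$ part of $\AP$ dropping out of the latter pairing for degree reasons.

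For the $l\geq 1$ case, I will extract the coefficient of $p_k(c)$ in $e_l^{p(c)}$ by applying Newton--Girard to the variables $z_i=e^{x_i}+e^{-x_i}-2=(2\sinh(x_i/2))^2$. In our quotient the recursion used in Proposition \ref{U-relation-thm} collapses (the cross terms vanish by $p_k^2=0$) to $e_l^{p(c)}=\tfrac{(-1)^{l+1}}{l}S_l^{p(c)}$, and expanding
\[
(2\sinh(x/2))^{2l}=\sum_{j=0}^{2l}\binom{2l}{j}(-1)^j e^{(l-j)x}
\]
together with the identity $T_{2k}(x)=T_k(y)=k\,p_k(c)$ (valid for $k$ odd under our assumption, by Newton--Girard applied to $y_i=x_i^2$) gives the coefficient of $p_k(c)$ in $e_l^{p(c)}$ equal to $\tfrac{(-1)^{l+1}C^{(p)}_l(2k)}{2l(2k-1)!}$, where $C^{(p)}_l(2k):=\sum_{j=0}^{2l}\binom{2l}{j}(-1)^j(l-j)^{2k}$. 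Inverting \eqref{SU-relation-thm-e1-8k+4} as $\langle p_k(c),\mu\rangle=2(2k-1)!\,\langle e_1^c\cdot\td,\mu\rangle$ then yields
\[
\langle e_l^{p(c)}\AP,\mu\rangle=\frac{(-1)^{l+1}C^{(p)}_l(2k)}{l}\,\langle e_1^c\cdot\td,\mu\rangle.
\]

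The main obstacle is the divisibility statement $C^{(p)}_l(2k)/l\in 2\Z$. I plan to handle this by identifying $C^{(p)}_l(2k)=(2l)!\,T(2k,2l)$, where $T(2k,2l)$ is the central factorial number of the second kind, a nonnegative integer (vanishing for $l>k$) defined by the generating function $(e^{x/2}-e^{-x/2})^{2l}/(2l)!=\sum_{n\geq 2l}T(n,2l)x^n/n!$. This immediately gives $C^{(p)}_l(2k)/l=2(2l-1)!\,T(2k,2l)\in 2\Z$, which together with $\langle e_1^c\cdot\td,\mu\rangle\in\Z$ from \eqref{SU-relation-thm-e1-8k+4} and $\langle\td,\mu\rangle\in 2\Z$ from \eqref{SU-relation-thm-Td-8k+4} establishes \eqref{BSUrelation4mod8}.
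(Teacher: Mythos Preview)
Your proof is correct and follows essentially the same architecture as the paper's: identify $\AP=\td$ via $c_1=0$, observe that only single factors $e_l^{p(c)}$ survive (you phrase this via $p_k^2=0$, the paper via degree), reduce $e_l^{p(c)}$ to a multiple of $e_1^{p(c)}=2\{e_1^c\}_{2k}$ through Newton--Girard and the explicit power-sum computation, and finish with a divisibility check. Your $C_l^{(p)}(2k)$ is exactly $2M_l(k)$ in the paper's notation (the full sum $j=0,\ldots,2l$ folds in half by the symmetry $j\leftrightarrow 2l-j$), so the final formulas coincide.

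The one substantive difference is the divisibility step: the paper simply cites \cite[Lemma~3]{KS17} for $l\mid M_l(k)$, whereas you supply a self-contained argument by recognizing $C_l^{(p)}(2k)=(2l)!\,T(2k,2l)$ with $T(2k,2l)$ an even-index central factorial number of the second kind, hence a nonnegative integer, giving $C_l^{(p)}(2k)/l=2(2l-1)!\,T(2k,2l)\in 2\Z$ directly. This is a clean way to avoid the external reference and makes the proof stand alone.
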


\begin{proof}
 $\td=\prod_i\frac{x_i}{1-e^{-x_i}}=\left(\prod_i e^{x_i/2}\right)\left(\prod_i \frac{x_i}{e^{x_i/2}-e^{-x_i/2}}\right)=e^{c_1/2}\AP$. Since $c_1=0$, the $\td$ class coincides with the $\AP$ class and the odd degree (of $c_j$) terms of $\td$ vanish for $SU$ manifold. Since $c_{\omega}=0$ except possibly for $c_{k}$, $c_k^2$, and $c_{2k}$, $k$ is odd and $t_k=\frac{B_k}{k!}=0$,
 $$\AP=\td=1+\td_{2k}=1+\frac{-1}{2}t_{2k} c_k^2+t_{2k}c_{2k}.$$
  
 In our notation, $e_l^{p(c)}$ is the $l$--th elementary symmetric polynomial of the variables $e^{x_i}+e^{-x_i}-2=\sum_{m=1}^\infty \frac{2 x_i^{2m}}{(2m)!}$, which contains only even degree terms of $x_i$, since $k$ is odd, each $e_l^{p(c)}$ class contains only degree $2k$ terms and is therefore expressed as a linear combination of $c_k^2$ and $c_{2k}$ with rational coefficients, then any monomial $e_{\omega}^{p(c)}=0$ if the degree $|\omega|>1$, so \eqref{BSUrelation4mod8} holds true if and only if $\langle \td, \mu\rangle\in2\Z$ and $\langle e_l^{p(c)}\mathord{\cdot} \td, \mu\rangle\in2\Z$ for all $l\geq 1$. We will show the almost complex relation $\langle e_1^c\mathord{\cdot}\td, \mu\rangle\in\Z$ guarantees the $SU$ relation $\langle e_l^{p(c)}\mathord{\cdot} \td, \mu\rangle\in2\Z$ for all $l\geq 1$ in this case.
 
In particular, by \eqref{Sx}, 
\begin{small}
\begin{eqnarray}\label{S1e1-SU}
	e_1^{p(c)}
	=\left\{\sum_i  \sum_{m=1}^\infty \frac{2x_i^{2m}}{(2m)!}\right\}_{2k} = \frac{2}{(2k)!} S_{2k}(x_i)=\df{1}{(2k-1)!}(c_k^2-2c_{2k}).\nonumber
	\end{eqnarray}
\end{small}
Comparing with the expression of $e_1^c$ in \eqref{S1e1}, note that $e_1^{p(c)}=2\{e_1^c\}_{2k}$.

Let $S_l^{p(c)}$ denote the $l$-th power sum of the variables $e^{x_i}+e^{-x_i}-2=\sum_{m=1}^\infty \frac{2 x_i^{2m}}{(2m)!}$, which also only contains degree $2k$ terms. Let $M_l(k):=\sum_{j=0}^{l-1} (-1)^j \binom{2l}{j} (l-j)^{2k}$.

\begin{small}
 \begin{eqnarray}\label{Sl-e1-SU}
S_{l}^{p(c)}=\ds_i(e^{x_i}+e^{-x_i}-2)^l=\ds_i\left(e^{x_i/2}-e^{-x_i/2}\right)^{2l}
\!\!\!\!&=&\!\!\!\!\left\{\ds_{i}\ds_{j=0}^{2l}(-1)^j\binom{2l}{j}e^{x_i(l-j)}\right\}_{\!\!2k}\nonumber\\
	&=&\ds_{i}\ds_{j=0}^{2l}(-1)^j\binom{2l}{j}\df{x_i^{2k}(l-j)^{2k}}{(2k)!}\nonumber\\
&=&2\left[\sum_{j=0}^{l-1} (-1)^j \binom{2l}{j} (l-j)^{2k}\right]\frac{S_{2k}(x_i)}{(2k)!} \nonumber\\
&=&M_l(k)e_1^{p(c)}.
\end{eqnarray}
\end{small}
Apply the Newton-Girard formula, since any monomial $e_{\omega}^{p(c)}=0$ if the degree $|\omega|>1$, 
\begin{eqnarray}\label{el-SU}e_l^{p(c)}&=&\df{(-1)^{l+1}}{l}S_l^{p(c)}=(-1)^{l+1}\df{M_l(k)}{l}e_1^{p(c)}.
\end{eqnarray}
By the observation that $e_1^{p(c)}=2\{e_1^c\}_{2k}$ and $\td=1+\td_{2k}$, 
\begin{equation}
\langle e_l^{p(c)}\mathord{\cdot} \td, \mu\rangle=(-1)^{l+1}\df{M_l(k)}{l}\ 2 \langle e_1^c\mathord{\cdot}\td, \mu\rangle.
\end{equation}
As addressed in \cite[Lemma 3]{KS17}, $M_l(k)$ is divisible by $l$, therefore $\langle e_1^{c}\mathord{\cdot} \td, \mu\rangle\in\Z$ implies $\langle e_l^{p(c)}\mathord{\cdot} \td, \mu\rangle\in2\Z$. 

\end{proof}

\section{almost complex manifold with Betti number $b_i=0$ except $i=0, n/2, n$.}
\label{sec:4}

We apply Theorem \ref{acthm} under our Betti number assumption. If a rational cohomology ring $H^*$ is realizable, it is necessary that condition (i) is satisfied, so we prescribe $H^*$ to have a bilinear form $(H^{2k},\lambda)$  isomorphic (over $\Q$) to an orthogonal sum of $\langle1\rangle$'s and $\langle-1\rangle$'s. Let $(H^*, \sigma, \chi)$ denote any $n=4k$--dimensional rational cohomology ring 
\begin{equation*}\label{H*}H^*\cong\left\{
                            \begin{array}{ll}
                              \Q  & \ast =0, 4k; \\
                              \Q^{r}& \ast = 2k;\\
                               0  & \hbox{ otherwise}
                            \end{array}
                          \right.
\end{equation*}
with signature $\sigma$ and Euler characteristic $\chi$, such that for some fundamental class $\mu\in H_0\cong \Q$, the bilinear form $\lambda: H^{2k}\times
H^{2k}\rightarrow \Q$ defined as
$\langle\cdot\cup\cdot, \mu\rangle$ is isomorphic to $a\langle1\rangle\oplus b\langle-1\rangle$ for some non-negative integers $a, b$. Then there exist basis classes $\alpha_1, \ldots \alpha_a, \overline{\alpha}_1, \ldots \overline{\alpha}_b\in H^{n/2}$ such that  $\langle \alpha_i\alpha_j,\mu\rangle=\delta_{ij}$, $\langle \overline{\alpha}_i\overline{\alpha}_j,\mu\rangle=-\delta_{ij}$, and $\langle \alpha_i\overline{\alpha}_j,\mu\rangle=0$. Since $(\sigma, \chi)=(a-b, a+b+2)$, we also want to prescribe $a=\frac{\chi+\sigma-2}{2}\geq 0, b=\frac{\chi-\sigma-2}{2}\geq 0$, and at least one of $a,b$ is nonzero.

By Theorem \ref{acthm}, there exists a simply-connected almost complex manifold $M^{4k}$ such that $H^*(M;\Q)\cong (H^*, \sigma, \chi)$ if and only if there exist choice of class $c=1+c_{k}+c_{2k}\in H^*$ with $c_{k}\in H^{2k}\cong \Q^r$ and  $c_{2k}\in H^{4k}\cong \Q$, and a fundamental class $\mu\in H_0\cong \Q$ that satisfy conditions (i)--(v),

\begin{enumerate}[label={\upshape(\roman*)}]
\item

(intersection form) By our definition, $(H^*, \sigma, \chi)$ already satisfied this condition and $a=\frac{\chi+\sigma-2}{2}\geq 0, b=\frac{\chi-\sigma-2}{2}\geq 0$. Then any class $c_{k}$ and $c_{2k}$ can be expressed as
$$
\begin{cases}
c_{k}=\sum_{i=1}^a w_i\alpha_i+\sum_{j=1}^b \overline{w}_j \overline{\alpha_j},  \ \ \ \mbox{ for some }  w_i, \overline{w}_j\in\Q\\
c_{2k}=y\alpha_1^2 \ \ \mbox{ or } -y\overline{\alpha_1}^2 \ \ \ \mbox{ for some }  y\in\Q
\end{cases}.
$$ 
Then for all partitions $I$ of $2k$, the parings $\langle c_I, \mu\rangle$ are zero except possibly for
$$x=\langle c_{k}^2,\mu\rangle=\sum_{i=1}^a w_i^2-\sum_{j=1}^b \overline{w}_j^2\ \ \mbox{ and } \ \ y=\langle c_{2k},\mu\rangle \ \ \ \mbox{ for some }  w_i, \overline{w}_j, y\in\Q.$$

\item
(signature equation) 
\begin{enumerate}
\item
When $n=4k\equiv 0\pmod{8}$,  $p_i(c)=0$ for all $i$ except $p_{k/2}=(-1)^{\frac{k}{2}}2c_{k}$ and $p_{k}=c_{k}^2+2c_{2k}$. The $k$--th $\LP$ class is $\LP_{k}(p)=s_{\frac{k}{2},\frac{k}{2}}p_{\frac{k}{2}}^2+s_{k}p_{k}$, where the coefficients $s_{\frac{k}{2}}=\frac{2^{k}(2^{k-1}-1)|B_{k}|}{k!}$ and $s_{\frac{k}{2},\frac{k}{2}}=\frac{1}{2}(s_{\frac{k}{2}}^2-s_{k})$.
Then condition (ii) says
\begin{eqnarray}\label{signature8k}
\ \ \ \langle L_{k}(p), \mu\rangle=\langle 4s_{\frac{k}{2},\frac{k}{2}} c_{k}^2 +s_{k}(c_{k}^2+2c_{2k}),\mu\rangle
=(2s_{\frac{k}{2}}^2-s_{k})x+2s_{k}y=\sigma
\end{eqnarray}
\item
When $n=4k\equiv 4\pmod{8}$, $p_i(c)=0$ for all $i$ except $p_{k}=c_{k}^2-2c_{2k}$. The $k$--th $\LP$ class is $\LP_{k}(p)=s_{k} p_{k}$, where the coefficient $s_{k}=\frac{2^{2k}(2^{2k-1}-1)|B_{2k}|}{2k!}$. Then (ii) says
\begin{eqnarray}\label{signature8k+4}
\langle \LP_{k}(p), \mu\rangle
=s_{k}(x-2y)=\sigma
\end{eqnarray}
\end{enumerate}

\item [(iii)] and (iv)
(integrality relations from $U$ and $SU$ cobordism)
\begin{enumerate}
\item
When $n=4k\equiv 0\pmod{8}$, condition (iii) says $x=\langle c_{k}^2,\mu\rangle $ and $y=\langle c_{2k},\mu\rangle$ are integers that satisfy the integrality relations \eqref{BUrelation}, which have been simplified in Proposition \ref{U-relation-thm} to $\langle \td, \mu\rangle\in \Z$, $\langle e_1^c\mathord{\cdot} \td, \mu\rangle\in\Z$ and $\langle e_1^ce_1^c\mathord{\cdot} \td, \mu\rangle\in\delta_k\Z$.

\item
When  $n=4k\equiv 4\pmod{8}$, since $H^2=0$, we are in the case $c_1=0$, so condition (iii) and (iv) says $x=\langle c_{k}^2,\mu\rangle $ and $y=\langle c_{2k},\mu\rangle$ are integers that satisfy the integrality relations \eqref{BUrelation} and \eqref{BSUrelation4mod8}. Combining Proposition \ref{U-relation-thm} and \ref{SU-relation-thm}, the relations are satisfied if and only if $\langle \td, \mu\rangle\in 2\Z$, $\langle e_1^c\mathord{\cdot} \td, \mu\rangle\in\Z$ and $\langle e_1^ce_1^c\mathord{\cdot} \td, \mu\rangle\in\delta_k\Z$.

\end{enumerate}

\item [(v)]
(Euler characteristic) condition (v) requires $y=\langle c_{2k}, \mu\rangle=\chi$.  
 
\end{enumerate}

\begin{remark}
Note that because of condition (v), we can not presumably choose an orientation $\mu$ to prescribe $a\geq b$ or $b\geq a$. This is essentially because any complex vector bundle gives the underlying real vector bundle a canonical preferred orientation. One can recall the fact that $\CP^2$ admits an almost complex structure but $\overline{\CP^2}$ does not.\\
\end{remark}

\subsection{Realization theorem} \label{subsec:4.1}\hfill\\

We phrase the realization theorem in dimension $n\equiv 0 \pmod{8}$ and $n\equiv 4\pmod{8}$ respectively. A rational cohomology ring $H^*$ under our Betti number assumption is realizable by an almost complex manifold if and only if the signature $\sigma$ and the Euler characteristic $\chi$ satisfy a set of integrality conditions.
 
 \begin{theorem}\label{existence8k}
There exists an $n=8k$--dimensional simply-connected closed almost complex manifold $M$ with Betti number $b_i=0$ except $b_0=b_n=1$ and $b_{n/2}\geq 1$, signature $\sigma$, and Euler characteristic $\chi$ if and only if $a=\frac{\chi+\sigma-2}{2}\geq 0, b=\frac{\chi-\sigma-2}{2}\geq 0$, and the following integrality relations are satisfied for some integer $x$.
\begin{subnumcases}
\ \langle L_{2k}, \mu\rangle=(2s_{k}^2-s_{2k})x+2s_{2k}\chi=\sigma \ \ {\scriptstyle \textup{with}  \ \    s_{k}=\frac{2^{2k}(2^{2k-1}-1)|B_{2k}|}{(2k)!}}    \label{thm-L-8k}\\
\langle\td, \mu\rangle=\frac{1}{2}(t_{2k}^2-t_{4k})x+t_{4k}\chi\in \Z \ \ {\scriptstyle \textup{with}\ \   t_k=\frac{B_k}{k!}\label{thm-Td-8k}}\\
\langle e_1^c\mathord{\cdot} \td, \mu\rangle=\left[\df{-t_{2k}}{(2k-1)!}+\df{1}{2(4k-1)!}\right]x -\df{\chi}{(4k-1)!}\in\Z \label{thm-e1Td-8k}\\
\langle e_1^ce_1^c\mathord{\cdot} \td, \mu\rangle=\df{x}{[(2k-1)!]^2}\in\delta_k\Z  \ \ {\small \textup{with}  \ \delta_k=\left\{
                            \begin{array}{ll}
                              1  & k=1\\
                              2& k>1
                            \end{array}
                          \right.}
\label{thm-e1e1Td-8k}
\end{subnumcases}

\end{theorem}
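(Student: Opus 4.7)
The plan is to apply Theorem \ref{acthm} in the restricted setting where $H^i=0$ for $i\notin\{0,n/2,n\}$, so that the only possibly nonzero homogeneous pieces of the total class $c = 1 + c_1 + \cdots + c_{n/2}$ are $c_{n/4}=c_{2k}\in H^{2k}$ and $c_{n/2}=c_{4k}\in H^{4k}$, and consequently the only nontrivial pairings among products of Chern classes are $x := \langle c_{2k}^2,\mu\rangle$ and $y := \langle c_{4k},\mu\rangle$. This is exactly the setup of Proposition \ref{U-relation-thm} applied with its dimension parameter $k$ playing the role of $2k$ here, so the $U$--integrality relations collapse to the three sub-relations listed in that proposition.

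For the necessity direction, suppose $M$ realizes $(H^*,\sigma,\chi)$. Taking $c=c(\tau_M)$ and $\mu=[M]$ in Theorem \ref{acthm}: condition (i) forces the middle-dimensional intersection form to diagonalize over $\Q$ to $a\langle 1\rangle\oplus b\langle -1\rangle$, hence $a,b\geq 0$; condition (v) gives $y=\chi$; and setting $x:=\langle c_{2k}(\tau_M)^2,[M]\rangle\in\Z$, the signature equation from (ii) specializes to \eqref{signature8k}, which after substituting $y=\chi$ and rewriting the $\LP$--coefficients with $2k$ in place of $k$ becomes exactly \eqref{thm-L-8k}. Because $n\equiv 0\pmod 8$, the $SU$ condition (iv) imposes no extra constraint, and Proposition \ref{U-relation-thm} converts the $U$--integrality condition (iii) into precisely \eqref{thm-Td-8k}, \eqref{thm-e1Td-8k}, and \eqref{thm-e1e1Td-8k}.

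For the sufficiency direction, given $a,b\geq 0$ and an integer $x$ satisfying the four relations, I would build the candidate data for Theorem \ref{acthm} directly: take $H^*$ with $H^{2k}$ of rank $a+b$ carrying an orthogonal basis $\alpha_1,\ldots,\alpha_a,\bar\alpha_1,\ldots,\bar\alpha_b$ dual to $a\langle 1\rangle\oplus b\langle -1\rangle$, fix a fundamental class $\mu\in H_0$, set $c_{4k}$ so that $\langle c_{4k},\mu\rangle=\chi$, and choose rationals $w_i,\bar w_j$ with $\sum w_i^2-\sum\bar w_j^2 = x$, letting $c_{2k}=\sum w_i\alpha_i+\sum\bar w_j\bar\alpha_j$. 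Then $\langle c_{2k}^2,\mu\rangle=x$ and $\langle c_{4k},\mu\rangle=\chi$ by construction, so the hypothesized relations \eqref{thm-L-8k}--\eqref{thm-e1e1Td-8k} translate back into conditions (ii) and (iii) of Theorem \ref{acthm}, while (i) holds by design and (v) by the choice of $c_{4k}$. Theorem \ref{acthm} then produces the desired almost complex manifold $M$.

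The main technical point is solvability of $\sum w_i^2-\sum\bar w_j^2 = x$ over $\Q$: when $a\geq 1$ and $b\geq 1$ one can simply take $w_1=(x+1)/2$, $\bar w_1=(x-1)/2$, and set all other coordinates to zero, which works unconditionally; when one of $a,b$ vanishes, realizing an arbitrary integer $x$ as (plus or minus) a sum of rational squares is governed by the Hasse--Minkowski theorem, and compatibility of the sign of $x$ forced by the signature equation \eqref{thm-L-8k} with this restriction has to be checked. I expect this to be the only place where routine substitution could require care; everything else reduces to mechanical matching between the general realization theorem and the three-class specialization of Proposition \ref{U-relation-thm}.
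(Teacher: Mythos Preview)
Your outline matches the paper's proof essentially step for step: both directions reduce to specializing Theorem~\ref{acthm} under the Betti-number hypothesis, invoking Proposition~\ref{U-relation-thm} (with its parameter $k$ replaced by $2k$) to collapse condition~(iii), and then representing the integer $x$ as $\sum w_i^2-\sum \bar w_j^2$ over~$\Q$.

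The one substantive difference is how the boundary case $a=0$ or $b=0$ is handled. You flag it and suggest checking sign compatibility of $x$ via the signature equation and Hasse--Minkowski. The paper instead shows this case \emph{never arises}: whenever the four relations admit an integer solution $x$, the integers $a=\tfrac{\chi+\sigma-2}{2}$ and $b=\tfrac{\chi-\sigma-2}{2}$ are automatically nonzero. For $n=8$ this is read off from the explicit parametrization in Proposition~\ref{thm-8dim}; for $n=8k$ with $k>1$ the paper forward-references Theorem~\ref{sigma-euler-div-8k}, which gives $4\mid\sigma$ and $4\mid\chi$, so $a\equiv b\equiv -1\pmod 2$. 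This is cleaner than your proposed route: no representability analysis is needed, because the degenerate intersection forms are already excluded by the integrality constraints themselves.
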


 \begin{theorem}\label{existence8k+4}
There exists an $n=8k+4 (k>0)$--dimensional simply-connected closed almost complex manifold $M$ with Betti number $b_i=0$ except $b_0=b_n=1$ and $b_{n/2}\geq 1$, signature $\sigma$, and Euler characteristic $\chi$ if and only if $a=\frac{\chi+\sigma-2}{2}\geq 0, b=\frac{\chi-\sigma-2}{2}\geq 0$, and the following integrality relations are satisfied for some integer $x$.
\begin{subnumcases}
\ \langle L_{2k+1}, \mu\rangle=s_{2k+1}(x-2\chi)=\sigma \label{thm-L-8k+4}\\ 
\langle\td, \mu\rangle=-\frac{1}{2}t_{4k+2}(x-2\chi)\in2\Z  \label{thm-Td-8k+4}\\ 
\langle e_1^c\mathord{\cdot} \td, \mu\rangle=\df{1}{2(4k+1)!}(x-2\chi)\in\Z \\ \label{thm-e1Td-8k+4}
\langle e_1^ce_1^c\mathord{\cdot} \td, \mu\rangle=\df{x}{[(2k)!]^2}\in2\Z \label{thm-e1e1Td-8k+4}
\end{subnumcases}
\end{theorem}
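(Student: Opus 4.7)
The plan is to specialize Theorem~\ref{acthm} to $n=8k+4$ using the rational cohomology setup at the start of Section~\ref{sec:4}, and unpack the integrality conditions via Propositions~\ref{U-relation-thm} and~\ref{SU-relation-thm} applied with the parameter $k' := 2k+1$. Two structural simplifications drive the proof: since $k'=2k+1$ is odd and $>1$ (as $k\geq 1$), the Bernoulli number $B_{k'}$ vanishes, so $t_{k'}=0$; and since $b_2=0$, we have $H^2=0$, which forces $c_1=0$ and places us in Case~2 of condition~(iv), activating the additional $SU$-integrality requirement.

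First I would translate conditions (i)--(v) of Theorem~\ref{acthm} into concrete constraints on $x := \langle c_{k'}^2, \mu\rangle$ and $y := \langle c_{2k'}, \mu\rangle$. Condition~(i) with the diagonal intersection form yields $a=(\chi+\sigma-2)/2\geq 0$ and $b=(\chi-\sigma-2)/2\geq 0$, while (v) forces $y=\chi$. For (ii), only $p_{k'}=c_{k'}^2-2c_{2k'}$ is a nonvanishing Pontryagin class, so $\LP_{k'}=s_{k'}p_{k'}$ reduces the signature equation to $s_{2k+1}(x-2\chi)=\sigma$, which is \eqref{thm-L-8k+4}. For (iii), I would apply Proposition~\ref{U-relation-thm} with parameter $k'$: substituting $t_{k'}=0$ and $y=\chi$ collapses \eqref{U-relation-thm-4k-Td}, \eqref{U-relation-thm-4k-e1}, \eqref{U-relation-thm-4k-e1e1} to $-\tfrac{1}{2}t_{4k+2}(x-2\chi)\in\Z$, $\tfrac{1}{2(4k+1)!}(x-2\chi)\in\Z$, and $x/[(2k)!]^2\in 2\Z$ respectively (using $\delta_{k'}=2$ since $k'=2k+1>2$). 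Condition~(iv) Case~2 then invokes Proposition~\ref{SU-relation-thm} with the same $k'$, which strengthens the Todd-class divisibility from $\Z$ to $2\Z$ while leaving the other sub-relation unchanged; combined with the simplified $U$ relations, this produces exactly \eqref{thm-L-8k+4}--\eqref{thm-e1e1Td-8k+4}.

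For the converse, given an integer $x$ satisfying the four relations together with $a,b\geq 0$, I would construct a model rational Poincar\'e duality algebra $H^*$ with the prescribed Betti numbers and diagonal intersection form, choose a fundamental class $\mu\in H_0$ and classes $c_{k'}, c_{2k'}$ realizing the pairings $\langle c_{k'}^2,\mu\rangle=x$ and $\langle c_{2k'},\mu\rangle=\chi$, then apply the sufficiency direction of Theorem~\ref{acthm}. The delicate point is producing $c_{k'}=\sum w_i\alpha_i+\sum \overline{w}_j\overline{\alpha}_j$ with $\sum w_i^2-\sum \overline{w}_j^2=x$ over $\Q$: when $a,b\geq 1$ this is automatic via $w_1=(x+1)/2$, $\overline{w}_1=(x-1)/2$, while in the boundary cases $a=0$ or $b=0$ the required sign of $x$ should follow from the signature equation \eqref{thm-L-8k+4} combined with the inequality on the nonzero Betti-number summand. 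Beyond this sign-compatibility verification, what remains is routine bookkeeping of vanishing odd Bernoulli numbers and the $U$-versus-$SU$ factor of $2$ in the Todd-class relation---this bookkeeping is the main obstacle I anticipate, rather than any deeper structural argument.
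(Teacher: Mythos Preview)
Your approach is the paper's: specialize Theorem~\ref{acthm} to the setup at the start of Section~\ref{sec:4} with $n=8k+4$, reduce conditions (ii)--(v) to the four displayed relations via Propositions~\ref{U-relation-thm} and~\ref{SU-relation-thm} (using $t_{2k+1}=0$ and the $SU$ strengthening of the Todd condition to $2\Z$), and for the converse realize the integer $x$ as $\sum w_i^2-\sum\overline w_j^2$ over~$\Q$.

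The one substantive divergence is your treatment of the boundary cases $a=0$ or $b=0$, and here your proposed route does not work. From \eqref{thm-L-8k+4} with $a=0$ (so $\sigma=2-\chi<0$) you obtain only $x<2\chi$, not $x\le 0$; and even $x\le 0$ would not guarantee that $-x$ is a sum of $b$ rational squares when $b\le 3$. The paper resolves this differently: it observes that the argument behind Theorem~\ref{sigma-euler-div-8k+4} uses only the relations \eqref{thm-L-8k+4}--\eqref{thm-e1e1Td-8k+4} themselves (not the existence of $M$), and these already force $4\mid\sigma$ and $4\mid\chi$. Hence $a=(\chi+\sigma-2)/2$ and $b=(\chi-\sigma-2)/2$ are both odd, in particular nonzero, so the boundary cases are vacuous. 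With $a,b\ge 1$ your explicit construction $w_1=(x+1)/2$, $\overline w_1=(x-1)/2$ then finishes the converse exactly as the paper does.
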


\begin{proof}[Proof of Theorem~\ref{existence8k} and \ref{existence8k+4}]
We have explained in the beginning of this section that conditions (ii)-(v) in Theorem \ref{acthm} applied in our prescribed rational cohomology ring result in \eqref{thm-L-8k}--\eqref{thm-e1e1Td-8k} in dimension $n=8k$ and \eqref{thm-L-8k+4}--\eqref{thm-e1e1Td-8k+4} in dimension $n=8k+4$, where the integer $x$ equals the Chern number $\langle c_{2k}^2(\tau_M),[M]\rangle$ of the realizing manifold $M$. We are only left to show that any integer $x$ satisfying these conditions can be written as $\sum_{i=1}^a w_i^2-\sum_{j=1}^b \overline{w}_j^2$ for some rational numbers $w_i, \overline{w}_j$, this is true as long as $a\neq 0$ and $b\neq 0$ because any integer $x$ can be written as a difference of two rational squares. In fact, if $\sigma$ and $\chi$ are integers that support any solution $x$ satisfying the integrality conditions, the integers $a=\frac{\chi+\sigma-2}{2}$ and $b=\frac{\chi-\sigma-2}{2}$ are automatically nonzero. For dimension $n=8$, this is shown in the proof of Proposition \ref{thm-8dim}. For any dimension $n=8k$ with $k>1$ or $n=8k+4$ with $k>0$, this can be seen from Theorem \ref{sigma-euler-div-8k} and Theorem \ref{sigma-euler-div-8k+4} respectively, which says the integrality conditions requires $\sigma$ and $\chi$ to be divisible by 4, therefore $a$ and $b$ must be nonzero. 

\end{proof}

\subsection{Examples in dimension 8, 12, and 16}\label{subsec:4.2} \hfill\\

For dimension $n=4k$ with $k=1,2,3$, we calculate the congruence relations among $\sigma$ and $\chi$, and work out some examples.

\begin{proposition}\label{thm-8dim}
There exists an $8$--dimensional simply-connected almost complex manifold with $b_i=0$ except $b_0=b_8=1$ and $b_{4}\geq 1$, signature $\sigma$, and Euler characteristic $\chi$ if and only if $\sigma\equiv 0\bmod{2}$, $\chi\equiv 0\bmod{6}$,  $3\sigma-\chi\equiv 0\bmod{48}$, and $a=\frac{\chi+\sigma-2}{2}> 0, b=\frac{\chi-\sigma-2}{2}> 0$.
\end{proposition}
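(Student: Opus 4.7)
The plan is to specialize Theorem \ref{existence8k} to $k=1$ and translate its four integrality conditions into explicit congruences on $\sigma$ and $\chi$, eliminating the auxiliary integer $x=\langle c_2^2,\mu\rangle$ in the process.

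First I would compute the numerical constants coming from the Bernoulli numbers. With $k=1$ in Theorem \ref{existence8k}, the relevant values are $t_2 = B_2/2! = 1/12$, $t_4 = B_4/4! = -1/720$, $s_1 = 1/3$, $s_2 = 7/45$, and $\delta_1 = 1$. Substituting into (\ref{thm-L-8k})--(\ref{thm-e1e1Td-8k}), the signature equation becomes $3x + 14\chi = 45\sigma$; the Todd condition becomes $3x - \chi \equiv 0 \pmod{720}$; the $e_1^c\cdot \td$ condition collapses (via the cancellation $-t_2 + 1/(2\cdot 3!) = -1/12 + 1/12 = 0$) to $-\chi/6 \in \Z$, i.e., $\chi \equiv 0 \pmod{6}$; and the $(e_1^c)^2\cdot \td$ condition reads $x \in \Z$.

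Next I would eliminate $x$. Solving the signature equation gives $x = 15\sigma - 14\chi/3$, which is automatically an integer once $\chi \equiv 0 \pmod{6}$. Substituting into the Todd condition, $3x - \chi = 15(3\sigma - \chi)$, so $15(3\sigma - \chi) \equiv 0 \pmod{720}$ is equivalent to $3\sigma - \chi \equiv 0 \pmod{48}$. The parity condition $\sigma \equiv 0 \pmod{2}$ then follows, since $\chi$ is even and $3\sigma \equiv \chi \pmod{48}$ forces $\sigma$ even; it is included in the statement for emphasis. For the converse direction, given $(\sigma,\chi)$ satisfying the three congruences together with $a, b > 0$, define $x := 15\sigma - 14\chi/3$, choose rationals $w_1, \overline{w}_1$ with $w_1^2 - \overline{w}_1^2 = x$ (possible precisely because both $a$ and $b$ are strictly positive, so that classes $\alpha_1$ and $\overline{\alpha}_1$ of opposite self-intersection exist in the prescribed bilinear form), and set $c_2 = w_1\alpha_1 + \overline{w}_1\overline{\alpha}_1$, $c_4 = \chi \alpha_1^2$; Theorem \ref{existence8k} then produces the desired manifold.

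The only delicate point I anticipate is verifying that the integrality constraints in dimension $8$ genuinely collapse to only the three stated congruences, which depends on the fortunate cancellation $-t_2 + 1/(2\cdot 3!) = 0$ in the $e_1^c\cdot\td$ integral. The strict positivity of $a$ and $b$ must be included as an independent hypothesis, because (unlike in higher dimensions addressed by Theorem \ref{sigma-euler-div-8k}) the $n=8$ integrality conditions alone do not force $|\sigma| \leq \chi - 4$; this is the reason Proposition \ref{thm-8dim} differs in form from the higher-dimensional statements.
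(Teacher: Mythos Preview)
Your proof is correct and follows essentially the same route as the paper: specialize Theorem \ref{existence8k} to $k=1$, compute the constants, and reduce the four conditions to congruences on $\sigma$ and $\chi$. The only cosmetic difference is that the paper parametrizes the solution set as $\chi=6s$, $\sigma=2s+16m$, $x=2s+240m$, whereas you eliminate $x$ directly via $3x-\chi=15(3\sigma-\chi)$; these are equivalent.

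One small inaccuracy in your final commentary: you assert that in dimension $8$, unlike higher dimensions, the integrality conditions do not force $a,b\neq 0$. In fact the paper observes (and uses, in the proof of Theorems \ref{existence8k} and \ref{existence8k+4}) that they do: with $\chi=6s$ and $\sigma=2s+16m$ one has $a=4s+8m-1$ and $b=2s-8m-1$, both odd, hence automatically nonzero once $a,b\geq 0$. So the hypothesis $a,b>0$ in Proposition \ref{thm-8dim} could equivalently be written $a,b\geq 0$; dimension $8$ is not exceptional in this respect. This does not affect the validity of your argument for the proposition itself.
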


\begin{proof}
For dimension $n=8$, conditions \eqref{thm-L-8k}--\eqref{thm-e1e1Td-8k} in Theorem \ref{existence8k} says:
\begin{equation*}
\begin{cases}
\langle L_{2}, \mu\rangle=\frac{1}{45}(3x+14\chi)=\sigma \\
\langle\td, \mu\rangle=\frac{1}{720}(3x-\chi)\in\Z\\
\langle e_1^c\mathord{\cdot} \td, \mu\rangle=-\frac{1}{6}\chi\in\Z \\
\langle e_1^ce_1^c\mathord{\cdot} \td, \mu\rangle=x\in\Z
\end{cases}
\Longleftrightarrow
\begin{cases}
3x+14\chi=45\sigma \\
3x-\chi=720m  \text{ for } m\in\Z\\
\chi=6s  \text{ for } s\in\Z^+
\end{cases}.
\end{equation*}
The system has integer solution $x$ if and only if
$\chi=6s$, $\sigma=2s+16m$ for $s\in \Z^+, m\in\Z$, and  $x=2s+240m$. The rational intersection form $a\langle1\rangle\oplus b\langle-1\rangle$ requires $a=\frac{\chi+\sigma-2}{2}=4s+8m-1\geq 0$ and $b=\frac{\chi-\sigma-2}{2}=2s-8m-1\geq 0$, which is equivalent to $\frac{-4s+1}{8}\leq m\leq\frac{2s-1}{8}$ and $s$ can take any positive integer value. Note that $a, b\neq 0$ for any $s\in\Z^+$ and $m\in\Z$.\end{proof}
Below is a list of realizable rational cohomology ring $(H^*,\sigma, \chi)$ with the smallest few possible values of Euler characteristic. Note that $\chi=48$ would be the smallest value to realize signature $\sigma=0$, with $(a,b)=(23, 23)$.
\begin{small}
\begin{eqnarray*}
&&\chi=6,\sigma=2;\ \ \ (a,b)=(3, 1)^*; \\
&&\chi=12,\sigma=4;\ \ \ (a,b)=(7, 3)^*; \\
&&\chi=18,\sigma=-10, 6;\ \ \ (a,b)=(3, 13), (11,5)^*; \\
&&\chi=24,\sigma= -8, 8;\ \ \ (a,b)=(7, 15), (15, 7)^*; \\
&&\chi=30,\sigma=-22, -6, 10, 26;\ \ \ (a,b)=(3, 25), (11, 17), (19, 9)^*, (27, 1); 
\end{eqnarray*} 
\end{small}
\begin{remark}
Note that in dimension 8, the prescribed rational cohomology $(H^*, \sigma, \chi)$ agree with that of $\#_a\HP^2\#_b\overline{\HP^2}$. 
One can compare the realizable examples listed above with the discussion in \cite[6.1]{Mil21}, which calculated that the connected sums of quaternionic projective planes $\#_a\HP^2\#_b\overline{\HP^2}$ admits almost complex structure if and only if $(a,b)=(4n+3,2n+1)$ for some $n\in\Z$, that is to say the integral (no--torsion) version of the rational cohomology ring $(H^*, \sigma, \chi)$ is realizable if and only if $\sigma\equiv 0\bmod{2}$, $\chi\equiv 0\bmod{6}$, and $3\sigma-\chi=0$. These examples are marked with $*$ in the above list of $(a,b)$.
\end{remark}

\begin{proposition}\label{thm-16dim}
There exists a $16$--dimensional simply-connected almost complex manifold with $b_i=0$ except $b_0=b_{16}=1$ and $b_{8}\geq 1$, signature $\sigma$, and Euler characteristic $\chi$ if and only if $\sigma\equiv 0\bmod{8}$, $\chi\equiv 0\bmod{120}$, $15\sigma-\chi\equiv 0\bmod{1952}$, and $a=\frac{\chi+\sigma-2}{2}> 0, b=\frac{\chi-\sigma-2}{2}> 0$.
\end{proposition}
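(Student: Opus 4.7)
The plan is to specialize Theorem~\ref{existence8k} to $k=2$ and reduce its four integrality conditions to a Diophantine system in the integer unknowns $x=\langle c_4^2,\mu\rangle$, $\sigma$, $\chi$; then to determine precisely when an integer solution $x$ exists, compatible with the positivity constraints on $a,b$.

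The first step is to evaluate the numerical coefficients at dimension $16$ using $B_4=B_8=-1/30$: namely $s_2=7/45$, $s_4=127/4725$, $t_4=-1/720$, and $t_8=-1/1209600$. Substituting these into \eqref{thm-L-8k}--\eqref{thm-e1e1Td-8k} and clearing denominators converts the four conditions of Theorem~\ref{existence8k} into
\begin{align*}
\text{(A)}\quad & 305x+762\chi=14175\sigma,\\
\text{(B)}\quad & 5x-3\chi\equiv 0 \pmod{3628800},\\
\text{(C)}\quad & 5x-3\chi\equiv 0 \pmod{15120},\\
\text{(D)}\quad & x\equiv 0 \pmod{72},
\end{align*}
where (C) is implied by (B).

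Next I would extract the necessary divisibilities. Reducing (B) modulo $5$ forces $\chi\equiv 0 \pmod{5}$; writing $\chi=5\chi'$, condition (B) gives $x=3\chi'+725760\,m$ for some $m\in\Z$. Since $72\mid 725760$, condition (D) reduces to $72\mid 3\chi'$, i.e., $24\mid\chi'$, so $\chi\equiv 0 \pmod{120}$. Writing $\chi=120c$ and $x=72c+725760\,m$, substituting into (A) and dividing through by the common factor $14175$ yields the parametric relation $\sigma=8c+15616\,m$. In particular $\sigma\equiv 0 \pmod{8}$, and the congruence $15\sigma-\chi\equiv 0\pmod{1952}$ follows from this parametric form once the automatic factor of $120$ coming from $\sigma\equiv 0\pmod 8$ and $\chi\equiv 0\pmod{120}$ is accounted for; here $15616=2^8\cdot 61$ encodes the $61$-factor from $305=5\cdot 61$ in the signature equation together with the $2$-adic content of the Todd denominator $3628800=2^8\cdot 3^4\cdot 5^2\cdot 7$.

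For the converse, given integers $\sigma,\chi$ satisfying the three stated congruences with $a,b>0$, set $c:=\chi/120$ and choose $m$ so that $\sigma=8c+15616\,m$; then $x:=72c+725760\,m$ is an integer simultaneously satisfying (A)--(D). The positivity $a,b>0$ ensures that $x$ can be realized as $\sum_{i=1}^a w_i^2-\sum_{j=1}^b\overline{w}_j^2$ for rationals $w_i,\overline{w}_j$ on the prescribed intersection form, verifying Theorem~\ref{existence8k}(i). The main computational hurdle is tracking the prime factorizations $305=5\cdot 61$, $14175=3^4\cdot 5^2\cdot 7$, and $3628800=2^8\cdot 3^4\cdot 5^2\cdot 7$ carefully enough to pin down the exact modulus $1952$ governing the remaining signature--Euler congruence.
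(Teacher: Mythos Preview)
Your approach mirrors the paper's almost exactly: both specialize Theorem~\ref{existence8k} to $k=2$, compute the same numerical system (your (A)--(D) are precisely the paper's displayed equations), and extract the identical parametrization $\chi=120s$, $\sigma=8(s+1952m)=8s+15616m$, $x=72s+725760m$.

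The one place where you go beyond the paper---attempting to connect the parametric form back to the stated congruence $15\sigma-\chi\equiv 0\pmod{1952}$---is where a genuine gap appears. Your converse step ``choose $m$ so that $\sigma=8c+15616m$'' requires $15616\mid(\sigma-8c)$ with $c=\chi/120$, which (since $\gcd(15,15616)=1$) is equivalent to $15\sigma-\chi\equiv 0\pmod{15\cdot 15616}=234240$. But the proposition's three congruences only force $15\sigma-\chi\equiv 0\pmod{\operatorname{lcm}(120,1952)}=29280$, a factor of $8$ too weak. Your own computation $15616=2^8\cdot 61$ versus $1952=2^5\cdot 61$ already signals this discrepancy, and the ``automatic factor of $120$'' you invoke does not close it. The paper's proof simply records the parametric form and does not verify it against the stated modulus either, so this is a shared issue (likely a typo in the proposition: the intended condition is $\sigma/8-\chi/120\equiv 0\pmod{1952}$, equivalently $15\sigma-\chi\equiv 0\pmod{234240}$) rather than a defect in your method.
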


\begin{proof}
For dimension $n=16$, \eqref{thm-L-8k}--\eqref{thm-e1e1Td-8k} in Theorem \ref{existence8k} says:
\begin{equation*}
\begin{cases}
\langle L_{2}, \mu\rangle=\frac{1}{14175}(305x+762\chi)=\sigma \\
\langle\td, \mu\rangle=\frac{1}{3628800}(5x-3\chi)\in\Z\\
\langle e_1^c\mathord{\cdot} \td, \mu\rangle=\frac{1}{15120}(5x-3\chi)\in\Z \\
\langle e_1^ce_1^c\mathord{\cdot} \td, \mu\rangle=\frac{x}{36}\in 2\Z\nonumber
\end{cases}
\Longleftrightarrow
\begin{cases}
305x+762\chi=3^45^27\sigma \\
5x-3\chi=2^83^45^27m \  \text{ for } m\in\Z\\
x=2^33^2\ell \  \text{ for } \ell\in\Z
\end{cases}
\end{equation*}
The system has integer solution for $x$ if and only if
$\chi=120s$, $\sigma=8(s+1952m)$ for $s\in \Z^+, m\in\Z$, and $x=72s+725760m$. 
The rational intersection form requires $a=\frac{\chi+\sigma-2}{2}=64s+7808m-1\geq 0$ and $b=\frac{\chi-\sigma-2}{2}=56s-7808m-1\geq 0$, which is equivalent to $\frac{-64s+1}{7808}\leq m\leq \frac{56s-1}{7808}$ and $s$ can take any positive integer value. Note that $\chi$ and $\sigma$ are both divisible by 4,  so $a, b\neq 0$ for any value of $s$ and $m$.

If $H^*$ is realizable, the smallest two possible values of $\chi$ are $\chi=120$, with $\sigma=8, (a,b)=(64, 56)$, and $\chi=240$, with $\sigma=16, (a,b)=(127, 111)$.  
\end{proof}

\begin{proposition}\label{thm-12dim}
There exists a $12$--dimensional simply-connected almost complex manifold with $b_i=0$ except $b_0=b_{12}=1$ and $b_{6}\geq 1$, signature $\sigma$, and Euler characteristic $\chi$ if and only if the signature $\sigma\equiv 0\bmod{(2^8)(31)}$, $\chi\equiv 0\bmod{4}$, and $a=\frac{\chi+\sigma-2}{2}> 0, b=\frac{\chi-\sigma-2}{2}> 0$.
\end{proposition}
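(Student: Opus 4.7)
The plan is to specialize Theorem \ref{existence8k+4} to $k=1$ (so $n=12$) and reduce the resulting four integrality conditions to explicit congruences on $\sigma$ and $\chi$ alone. First I would record the two Bernoulli-number constants that enter: using $B_6 = 1/42$, one obtains $s_3 = \frac{2^6(2^5-1)|B_6|}{6!} = \frac{62}{945}$ and $t_6 = B_6/6! = \frac{1}{30240}$. Substituting these into \eqref{thm-L-8k+4}--\eqref{thm-e1e1Td-8k+4} turns the system into an explicit set of linear/divisibility relations in the unknown integer $x = \langle c_3^2,\mu\rangle$ together with $\sigma$ and $\chi$.

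The key elimination step uses the signature equation $62(x-2\chi) = 945\sigma$. Since $\gcd(62,945)=1$, this forces $x - 2\chi = 945 m$ and $\sigma = 62m$ for some integer $m$. Feeding this into \eqref{thm-Td-8k+4} gives $\langle\td,\mu\rangle = -m/64 \in 2\Z$, i.e.\ $128 \mid m$, which rewrites as $\sigma \equiv 0 \pmod{2^8\cdot 31}$. The condition \eqref{thm-e1Td-8k+4} becomes $63m/16 \in \Z$, automatic from $128\mid m$. Finally, plugging $x = 945m + 2\chi$ into \eqref{thm-e1e1Td-8k+4} and observing that $8 \mid 945\cdot 128$, the condition reduces to $8 \mid 2\chi$, i.e.\ $4 \mid \chi$. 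Conversely, given $\sigma \equiv 0 \pmod{2^8\cdot 31}$ and $\chi \equiv 0 \pmod 4$, one sets $m = \sigma/62$ and $x = 945 m + 2\chi$ to exhibit an integer solution, so the congruences are both necessary and sufficient for an integer $x$ to exist.

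It remains to match the intersection-form requirements. The divisibilities $4 \mid \sigma$ and $4 \mid \chi$ imply that both $\chi + \sigma - 2$ and $\chi - \sigma - 2$ are $\equiv 2 \pmod 4$, so $a = \frac{\chi+\sigma-2}{2}$ and $b = \frac{\chi-\sigma-2}{2}$ are odd integers, hence never zero; thus the strict positivity $a,b>0$ of the proposition coincides with nonnegativity and is imposed exactly to guarantee that $x$ can be realized as $\sum w_i^2 - \sum \overline{w}_j^2$ over $\Q$, as in the proof of Theorem \ref{existence8k+4}.

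The main obstacle I anticipate is bookkeeping the interaction between the four distinct denominators $945,\ 60480,\ 240,\ 4$. One has to confirm that the $\td$-condition \eqref{thm-Td-8k+4} is the sharp constraint supplying the factor $(2^8)(31)$ in $\sigma$ (via the prime $2^5 - 1 = 31$ in $s_3$ combined with the $2$-adic contribution from $t_6$), and that the $e_1^c e_1^c\!\cdot\!\td$ condition is the one that pins down $\chi \bmod 4$, with no hidden cancellation weakening either divisibility. Once those two facts are verified, the remaining conditions are automatic and the proposition follows.
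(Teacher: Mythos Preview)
Your proposal is correct and follows essentially the same approach as the paper: specialize Theorem~\ref{existence8k+4} to $k=1$, compute $s_3=62/945$ and $t_6=1/30240$, and solve the resulting linear system in $x$, $\sigma$, $\chi$ to extract the congruences $\sigma\equiv 0\pmod{2^8\cdot 31}$ and $\chi\equiv 0\pmod 4$. The only cosmetic difference is the parametrization---the paper writes $x-2\chi=2^7\cdot 3^3\cdot 5\cdot 7\,m$ directly from the Todd condition while you first set $x-2\chi=945m$ from the signature equation and then refine to $128\mid m$---but the logic and the verification that $a,b$ are automatically odd (hence nonzero) are the same.
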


\begin{proof}
For dimension $n=12$, conditions \eqref{thm-L-8k+4}--\eqref{thm-e1e1Td-8k+4}  in Theorem \ref{existence8k+4} says:
\begin{equation*}
\begin{cases}
\langle L_{3}, \mu\rangle=\frac{62}{945}(x-2\chi)=\sigma \\
\langle\td, \mu\rangle=\frac{-1}{60480}(x-2\chi)\in2\Z\\
\langle e_1^c\mathord{\cdot} \td, \mu\rangle=\frac{1}{240}(x-2\chi)\in\Z \\
\langle e_1^ce_1^c\mathord{\cdot} \td, \mu\rangle=\frac{x}{4}\in2\Z
\end{cases}
\Longleftrightarrow
\begin{cases}
62(x-2\chi)=3^35^17\sigma \\
x-2\chi=2^73^35^17m  \text{ for } m\in\Z\\
x=8\ell   \text{ for } \ell\in\Z
\end{cases}.
\end{equation*}
The system has solution if and only if
$\chi=4s$, $\sigma=(2^8)(31)m$ for $s\in \Z^+, m\in\Z$, and $x=8s+(2^7)(3^3)(5)(7)m$. The rational intersection form $a\langle1\rangle\oplus b\langle-1\rangle$ requires $a=\frac{\chi+\sigma-2}{2}=2s+2968m-1\geq 0$ and $b=\frac{\chi-\sigma-2}{2}=2s-2968m-1\geq 0$, which is equivalent to $s\geq 1984|m|+1$ and $m$ can take any integer value. Again note that $\chi$ and $\sigma$ are both divisible by 4, so $a, b\neq 0$ for any value of $s$ and $m$.

If $\sigma=0$, $H^*$ is realizable if and only if $\chi=4s$ for $s\geq 1$. The smallest nonzero absolute value of the signature is $|\sigma|=7936$, in this case, $H^*$ is realizable if and only if $\chi=4s$ for $s\geq 1985$.
\end{proof}

\subsection{Divisibility of the signature and Euler characteristic} \label{subsec:4.3}\hfill\\

We obtain the following divisibility results on $\sigma$ and $\chi$ in dimension $n=8k$ and $n=8k+4$ respectively. The approach is similar to that applied in \cite{FS16} and \cite{KS17}, by examining the coefficients in the signature equation, Todd genus, and the other integrality conditions of Chern numbers. Note that these divisibility conditions are necessary for any, not only simply-connected, almost complex manifold realizing the prescribed rational cohomology ring.

\begin{theorem}\label{sigma-euler-div-8k}

If $M^n$ is an $n=8k$--dimensional almost complex manifold with Betti number $b_i=0$ except $b_0=b_n=1$ and $b_{n/2}\geq 1$, its signature $\sigma$ and Euler characteristic $\chi$ must satisfy the following divisibility conditions, where $\nu_2(k)$ denotes the 2-adic order of $k$, $\textup{wt}(k)$ denotes the number of ones in the binary representation of $k$.
\begin{eqnarray} 
&&\nu_2(\sigma) \geq 4k - 2\nu_2(k) - 3, \label{sigma-div-8k}\\
&&\nu_2(\chi) \geq 4k - 2\nu_2(k) - 2\textup{wt}(k)-2. \label{euler-div-8k}
\end{eqnarray}For example, in dimension $n=8$, $\sigma\equiv 0\bmod{2}$,

\hspace{1.75cm} in dimension $n=16$, $\sigma\equiv 0\bmod{2^3}$ and $\chi\equiv 0\bmod{2^2}$,

\hspace{1.75cm} in dimension $n=24$, $\sigma\equiv 0\bmod{2^9}$ and $\chi\equiv 0\bmod{2^6}$.

\end{theorem}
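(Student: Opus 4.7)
The plan is to extract both divisibility bounds directly from the necessary integrality relations \eqref{thm-L-8k}--\eqref{thm-e1e1Td-8k} supplied by Theorem \ref{existence8k}. I introduce two auxiliary integers, the left-hand sides of \eqref{thm-e1Td-8k} and \eqref{thm-e1e1Td-8k}:
\[
T_1 := \langle e_1^c\cdot\td,\mu\rangle\in\mathbb{Z},\qquad T_{11} := \langle e_1^c e_1^c\cdot\td,\mu\rangle\in\delta_k\mathbb{Z}.
\]
From \eqref{thm-e1e1Td-8k} one reads off $x = [(2k-1)!]^2\,T_{11}$, and substituting into \eqref{thm-e1Td-8k} and solving for $\chi$ gives
\[
\chi = -(4k-1)!\,T_1 + \tfrac{T_{11}}{2}\bigl([(2k-1)!]^2 - \tfrac{B_{2k}(4k-1)!}{k}\bigr),
\]
together with the analogous rearrangement $x - 2\chi = 2(4k-1)!\,T_1 + \tfrac{B_{2k}(4k-1)!}{k}\,T_{11}$.

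Next I would tabulate the relevant 2-adic valuations using Legendre's formula $\nu_2(n!) = n-\textup{wt}(n)$, the digit identity $\textup{wt}(2k-1) = \textup{wt}(k) + \nu_2(k)$, and the von Staudt--Clausen theorem $\nu_2(B_{2m}) = -1$. These give $\nu_2((2k-1)!) = 2k-1-\textup{wt}(k)-\nu_2(k)$, $\nu_2((4k-1)!) = 4k-2-\textup{wt}(k)-\nu_2(k)$, $\nu_2(B_{2k}(4k-1)!/k) = 4k-3-\textup{wt}(k)-2\nu_2(k)$, and $\nu_2(s_k) = \nu_2(s_{2k}) = \textup{wt}(k)-1$. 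A short case split on whether $\textup{wt}(k) \geq 2$ (strict comparison of valuations) or $\textup{wt}(k) = 1$ (equal valuations, only improvable by cancellation) then shows
\[
\nu_2\bigl([(2k-1)!]^2 - \tfrac{B_{2k}(4k-1)!}{k}\bigr) \;\geq\; 4k - 2 - 2\textup{wt}(k) - 2\nu_2(k)
\]
in all cases. Combined with $\nu_2(T_{11}) \geq \nu_2(\delta_k) = 1$ for $k > 1$, each of the three summands of $\chi$ has 2-adic valuation at least $4k-2-2\textup{wt}(k)-2\nu_2(k)$, which is exactly \eqref{euler-div-8k}.

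For the signature I would rewrite \eqref{thm-L-8k} as $\sigma = 2s_k^2\,x - s_{2k}(x-2\chi)$, and apply the same valuation inputs to the formula for $x-2\chi$. This yields $\nu_2(x-2\chi) \geq 4k-2-\textup{wt}(k)-2\nu_2(k)$, hence $\nu_2(s_{2k}(x-2\chi)) \geq 4k - 3 - 2\nu_2(k)$; similarly $\nu_2(2s_k^2\,x) \geq 4k - 2 - 2\nu_2(k)$, and \eqref{sigma-div-8k} follows by term-by-term comparison.

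The main obstacle is the degenerate case $k = 1$, where $[(2k-1)!]^2 = B_{2k}(4k-1)!/k = 1$ so the parenthesized quantity collapses to zero, leaving $\chi = -6\,T_1$ and $x - 2\chi = 12\,T_1 + T_{11}$. The $\chi$ bound becomes automatic, but the generic signature estimate now yields only $\nu_2(\sigma) \geq 0$, one short of the required $\nu_2(\sigma) \geq 1$. I would close this gap by invoking the Todd integrality \eqref{thm-Td-8k}: the congruence $3x - \chi \equiv 0 \pmod{16}$ together with $\chi = -6\,T_1$ forces $T_{11} \equiv -2\,T_1 \pmod{16}$, so $T_{11}$ is even, whereupon $\sigma = (3x+14\chi)/45 = (3T_{11} - 84\,T_1)/45$ is manifestly even.
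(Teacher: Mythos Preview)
Your argument is correct. The treatment of $\chi$ is essentially identical to the paper's: both multiply \eqref{thm-e1Td-8k} through by $(4k-1)!$, substitute $x=[(2k-1)!]^2 T_{11}$ from \eqref{thm-e1e1Td-8k}, and compare the three resulting $2$--adic valuations.

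For $\sigma$, however, you take a genuinely different route. The paper exploits the identity $s_{2k}=-2^{4k}(2^{4k-1}-1)t_{4k}$ to combine the signature equation \eqref{thm-L-8k} with the Todd integrality \eqref{thm-Td-8k}, thereby \emph{eliminating} the $(x-2\chi)$ term and isolating $\sigma$ against a single expression whose valuation is read off directly. You instead bypass the Todd genus entirely for $k>1$: you feed the expressions for $x$ and $x-2\chi$ (obtained from \eqref{thm-e1Td-8k} and \eqref{thm-e1e1Td-8k}) straight into $\sigma=2s_k^2 x - s_{2k}(x-2\chi)$ and bound each term separately, using $\nu_2(s_k)=\nu_2(s_{2k})=\textup{wt}(k)-1$. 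This has the minor conceptual advantage of showing that, for $k>1$, the signature bound already follows from the two $e_1^c$--relations alone, without appeal to \eqref{thm-Td-8k}. The paper's elimination trick is cleaner in that it avoids the term--by--term comparison, but both arguments are of comparable length.

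Your handling of the boundary case $k=1$ is also slightly different: the paper simply invokes the explicit dimension--$8$ computation (Proposition~\ref{thm-8dim}), whereas you extract the parity of $T_{11}$ directly from $3x-\chi\equiv 0\pmod{16}$ and conclude $\nu_2(\sigma)\geq 1$ from $45\sigma=3T_{11}-84T_1$. This is a tidy self--contained alternative.
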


\begin{proof}
It has been shown in Proposition \ref{thm-8dim} that any $n=8$-dimensional almost complex manifold realizing $H^*$ must have the signature $\sigma\equiv 0\bmod{2}$, so \ref{sigma-div-8k} holds true for $k=1$.

If $(H^*, \sigma, \chi)$ is realizable by an $n=8k$--dimensional almost complex manifold, conditions \eqref{thm-L-8k}, \eqref{thm-Td-8k} and \eqref{thm-e1e1Td-8k} in Theorem \ref{existence8k}  says there must exist an integer $x$ such that 
\begin{subnumcases}
\ 2s_{k}^2x-s_{2k}(x-2\chi)=\sigma \label{signature-div-signature}\\
t_{2k}^2x-t_{4k}(x-2\chi)=2m \ \   \text{ for } m\in\Z  \label{signature-div-Td}\\
x=[(2k-1)!]^2\,2\ell \label{signature-div-e1e1Td}   \text{ for } \ell\in\Z
\end{subnumcases}
Since $s_{2k} = -2^{4k} (2^{4k-1} - 1) t_{4k}$,  we multiply \ref{signature-div-Td} by $2^{4k} (2^{4k-1} - 1)$ and add it to \ref{signature-div-signature} to eliminate the $(x-2\chi)$ term, after simplifying and plugging in \ref{signature-div-e1e1Td}, we have
\[\underbrace{2^{4k}(2^{4k}-2^{2k+1} + 1)\left(\df{|B_{2k}|}{2k}\right)^2 2}_{L1}\ell= \underbrace{2^{4k+1} (2^{4k-1} - 1)}_{R1} m + \sigma.\]
By the 2-adic order formula $\nu_2\left(\frac{|B_{2k}|}{2k}\right)=-(\nu_2(k)+2)$ (\cite[section 3]{KS17}), we have 
$$\nu_2(L1)=4k-2\nu_2(k)-3, \ \ \nu_2(R1)=4k+1>\nu_2(L1).$$
If $\sigma\neq 0$, then $\nu_2(\sigma) \geq\textup{min}\{\nu_2(L1), \nu_2(R1)\}=\nu_2(L1)$, which proved \eqref{sigma-div-8k}.
To show the Euler characteristic divisibility in \eqref{euler-div-8k}, we use conditions \eqref{thm-e1Td-8k} and \eqref{thm-e1e1Td-8k}, 
\begin{subnumcases}
\ \left[\df{-t_{2k}}{(2k-1)!}+\df{1}{2(4k-1)!}\right]x -\df{\chi}{(4k-1)!}=m \ \  \text{ for } m\in\Z \label{euler-div-e1Td}\\
x=[(2k-1)!]^2\,2\ell \ \  \text{ for } \ell\in\Z \label{euler-div-e1e1Td}
\end{subnumcases}
Multiply \eqref{euler-div-e1Td} by $(4k-1)!$ and plugin \eqref{euler-div-e1e1Td}, we get 
\[\underbrace{-\df{|B_{2k}|}{2k}(4k-1)!\, 2}_{L1}\ell+\underbrace{[(2k-1)!]^2}_{L2}\ell-\chi= \underbrace{(4k-1)!}_{R1} m.\]
By the 2-adic order formula $\nu_2\!\left(\!\frac{|B_{2k}|}{2k}\!\right)=-(\nu_2(k)+2)$, 
$\nu_2[(2k-1)!]=2k-\nu_2 (k)-\textup{wt}(k)-1$, and 
$\nu_2[(4k-1)!]=4k-\nu_2(k)-\textup{wt}(k)-2$ 
(computed in \cite[section 3]{KS17}), we have 
\begin{eqnarray*}
&&\nu_2(L1)=4k-2\nu_2(k)-\textup{wt}(k)-3, \ \ \nu_2(L2)=4k-2\nu_2(k)-2\textup{wt}(k)-2,\\
&& \ \nu_2(R1)=4k-\nu_2(k)-\textup{wt}(k)-2.
 \end{eqnarray*}
Then $\nu_2(\chi)\geq\textup{min}\{\nu_2(L1), \nu_2(L2), \nu_2(R1)\}=\nu_2(L2)$. 
\end{proof}

\begin{theorem}\label{sigma-euler-div-8k+4}

If $M^n$ is an $n=8k+4\  (k>0)$--dimensional almost complex manifold with Betti number $b_i=0$ except $b_0=b_n=1$ and $b_{n/2}\geq 1$, its  signature $\sigma$ and Euler characteristic $\chi$ must satisfy the following divisibility conditions:
\begin{eqnarray}
&&\sigma\equiv 0 \bmod{2^{4k+4}(2^{4k+1}-1)} \label{sigma-div-8k+4}\\
&&\chi\equiv 0 \bmod{[(2k)!]^2} \label{euler-div-8k+4}
\end{eqnarray}
In particular, the 2--adic order of $\sigma$ and $\chi$ satisfy:
\begin{eqnarray}
&&\nu_2(\sigma)\geq 4k+4 \label{sigma-div-8k+4.}\\
&&\nu_2(\chi)\geq 4k-2\textup{wt}(k) \label{euler-div-8k+4.}
\end{eqnarray}

\end{theorem}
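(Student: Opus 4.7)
The plan is to mirror the $n=8k$ argument of Theorem \ref{sigma-euler-div-8k} and read the two divisibility statements directly off the four integrality conditions \eqref{thm-L-8k+4}--\eqref{thm-e1e1Td-8k+4}, treating $x$ and $\chi$ as the unknowns.

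For the Euler characteristic, I would first parameterize the two simpler relations. Condition \eqref{thm-e1e1Td-8k+4} forces $x = 2[(2k)!]^2\,\ell$ for some $\ell\in\Z$, and condition \eqref{thm-e1Td-8k+4} forces $x-2\chi = 2(4k+1)!\,m$ for some $m\in\Z$. Subtracting the second from the first and dividing by $2$ gives
\[
\chi \;=\; [(2k)!]^2\,\ell \;-\; (4k+1)!\,m.
\]
The identity $(4k+1)! = (4k+1)\binom{4k}{2k}[(2k)!]^2$ makes both terms on the right divisible by $[(2k)!]^2$, which is \eqref{euler-div-8k+4}. The $2$-adic bound \eqref{euler-div-8k+4.} then follows from Legendre's formula $\nu_2((2k)!) = 2k - \textup{wt}(2k) = 2k - \textup{wt}(k)$.

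For the signature I would rewrite the coefficient $s_{2k+1}$ in terms of the top Todd coefficient. Because $B_{4k+2}>0$ one has $|B_{4k+2}| = B_{4k+2}$, hence
\[
s_{2k+1} \;=\; \frac{2^{4k+2}(2^{4k+1}-1)\,|B_{4k+2}|}{(4k+2)!} \;=\; 2^{4k+2}(2^{4k+1}-1)\,t_{4k+2}.
\]
Condition \eqref{thm-Td-8k+4} asserts $-\tfrac{1}{2}\,t_{4k+2}(x-2\chi) \in 2\Z$, i.e.\ $t_{4k+2}(x-2\chi) \in 4\Z$. Plugging this into \eqref{thm-L-8k+4} yields
\[
\sigma \;=\; s_{2k+1}(x-2\chi) \;=\; 2^{4k+2}(2^{4k+1}-1)\,t_{4k+2}(x-2\chi) \;\in\; 2^{4k+4}(2^{4k+1}-1)\,\Z,
\]
which is \eqref{sigma-div-8k+4}; and since $2^{4k+1}-1$ is odd, \eqref{sigma-div-8k+4.} follows at once.

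The step I would be most careful about is that the hypothesis on $M$ is only ``almost complex,'' not simply connected, so one cannot literally invoke the ``only if'' direction of Theorem \ref{existence8k+4}. However, the four relations \eqref{thm-L-8k+4}--\eqref{thm-e1e1Td-8k+4} are each a consequence of a necessary condition on the rational Chern numbers of $M$ that makes no appeal to $\pi_1$: the Hirzebruch signature theorem, Hirzebruch--Riemann--Roch integrality, and Stong's $SU$-cobordism integrality---the last being available here because $b_2 = 0$ forces $c_1(\tau_M)\otimes\Q = 0$. Once that is verified the rest is digit-sum bookkeeping, and the main ``obstacle''---matching the power of $2$ in $(4k+1)!$ against that in $[(2k)!]^2$---is dispatched by the single binomial identity used above.
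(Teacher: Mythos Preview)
Your proof is essentially the same as the paper's: for $\sigma$ both combine \eqref{thm-L-8k+4} and \eqref{thm-Td-8k+4} via the identity $s_{2k+1}=2^{4k+2}(2^{4k+1}-1)\,t_{4k+2}$, and for $\chi$ both feed $x=2[(2k)!]^2\ell$ (from \eqref{thm-e1e1Td-8k+4}) and $x-2\chi=2(4k+1)!\,m$ (from the $e_1^c\mathord{\cdot}\td$ relation) into the binomial identity $(4k+1)!=(4k+1)\binom{4k}{2k}[(2k)!]^2$. Your closing paragraph on relaxing simple-connectivity addresses a point the paper only asserts in the preamble to \S\ref{subsec:4.3} without argument.
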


\begin{proof}
The Todd genus condition \eqref{thm-Td-8k+4} says  $-t_{4k+2}(x-2\chi)=4m$ for some $m\in\Z$. Since $s_{2k+1} = 2^{4k+2} (2^{4k+1} - 1) t_{4k+2}$, multiplying this equation by $2^{4k+2} (2^{4k+1} - 1)$ and adding it to the signature equation \eqref{thm-L-8k+4} yields
$0=\sigma + 2^{4k+2} (2^{4k+1} - 1) 4m$, which proved the claim on $\sigma$.

By \eqref{thm-e1e1Td-8k+4}, $x=2[(2k)!]^2\ell$ for some $\ell\in\Z$, plug this into the Todd genus condition \eqref{thm-Td-8k+4} which says $x-2\chi=2(4k+1)!m$ for some $m\in\Z$, we have $[(2k)!]^2\ell-\chi=(4k+1)!m$, since $\frac{(4k+1)!}{[(2k)!]^2}=(4k+1)\binom{4k}{2k}\in\Z$, \eqref{euler-div-8k+4} is proved. $\nu_2((2k)!)=2k-\textup{wt}(2k)=2k-\textup{wt}(k)$ gives \eqref{euler-div-8k+4.}.

\end{proof}

Theorem \ref{sigma-euler-div-8k} and Theorem \ref{sigma-euler-div-8k+4} imply the following.

\begin{theorem}\label{nonexist-1}
An $n=4k (k>1)$--dimensional closed almost complex manifold with Betti number $b_i=0$ except $b_0=b_n=1$,$b_{n/2}\geq 1$ must have even signature $\sigma$ and even Euler characteristic $\chi$, i.e., the middle Betti number $b_{n/2}$ must be even.
\end{theorem}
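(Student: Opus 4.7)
My plan is to deduce Theorem \ref{nonexist-1} directly from the two divisibility theorems \ref{sigma-euler-div-8k} and \ref{sigma-euler-div-8k+4} that precede it, by verifying in each parity class of $n \bmod 8$ that the stated lower bounds on $\nu_2(\sigma)$ and $\nu_2(\chi)$ are already $\geq 1$ whenever $k > 1$. The final statement about $b_{n/2}$ then follows immediately from the identity $\chi = 2 + b_{n/2}$ (valid since $n/2 = 2k$ is even, so the middle homology contributes with a plus sign to the Euler characteristic, and all other Betti numbers except $b_0=b_n=1$ vanish).

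First I would handle $n = 8k$. Theorem \ref{sigma-euler-div-8k} gives $\nu_2(\sigma) \geq 4k - 2\nu_2(k) - 3$; since $\nu_2(k) \leq \log_2 k$, this bound is $\geq 2k - \log_2 k - 3 \cdot \tfrac{1}{2}\cdot 2 \geq 1$ for every $k \geq 1$ (the case $k=1$ gives $\nu_2(\sigma)\geq 1$ directly, recovering the $\sigma \equiv 0 \bmod 2$ statement in Proposition \ref{thm-8dim}). For the Euler characteristic bound $\nu_2(\chi) \geq 4k - 2\nu_2(k) - 2\,\textup{wt}(k) - 2$, I would use the elementary inequality $\nu_2(k) + \textup{wt}(k) \leq \lfloor\log_2 k\rfloor + 1$ (since the high bits live strictly above position $\nu_2(k)$), yielding $\nu_2(\chi) \geq 4k - 2\lfloor\log_2 k\rfloor - 4$, which is $\geq 1$ for every $k \geq 2$. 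The borderline case $k=1$ (i.e.\ $n=8$) is not inside the hypothesis $k_{\text{nonexist}} > 1$ of Theorem \ref{nonexist-1} per se but does fall under the range $n=8$; here the $\chi$ bound degenerates to $\nu_2(\chi) \geq 0$, and I would patch this by invoking Proposition \ref{thm-8dim}, which independently shows $\chi \equiv 0 \bmod 6$.

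Next I would handle $n = 8k+4$ with $k \geq 1$. Theorem \ref{sigma-euler-div-8k+4} gives $\nu_2(\sigma) \geq 4k+4 \geq 8$ and $\nu_2(\chi) \geq 4k - 2\,\textup{wt}(k)$. Using $\textup{wt}(k) \leq \lfloor\log_2 k\rfloor + 1$, the latter is $\geq 4k - 2\log_2 k - 2 \geq 1$ for every $k \geq 1$. So both $\sigma$ and $\chi$ are even in this range as well.

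The only mildly delicate point — and what I would regard as the main obstacle — is the edge case $n=8$ (the smallest dimension allowed by $k>1$), where the general $\chi$ bound from Theorem \ref{sigma-euler-div-8k} gives no information and one has to appeal separately to the explicit $8$-dimensional analysis in Proposition \ref{thm-8dim}. Once these endpoint issues are tidied up, combining the three divisibilities ($\sigma$ even, $\chi$ even) with $\chi = 2 + b_{n/2}$ yields the conclusion that $b_{n/2}$ is even, completing the proof.
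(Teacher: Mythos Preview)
Your approach is essentially the same as the paper's: the paper simply states that Theorem~\ref{nonexist-1} is implied by Theorems~\ref{sigma-euler-div-8k} and~\ref{sigma-euler-div-8k+4}, without writing out any verification of the inequalities. You are more explicit, and you correctly isolate the one genuine edge case the paper glosses over, namely $n=8$, where the bound $\nu_2(\chi)\geq 4k-2\nu_2(k)-2\wt(k)-2$ from Theorem~\ref{sigma-euler-div-8k} with $k=1$ degenerates to $\nu_2(\chi)\geq 0$ and yields nothing.

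One small point on that patch: Proposition~\ref{thm-8dim} as stated is an if-and-only-if for \emph{simply-connected} manifolds, whereas Theorem~\ref{nonexist-1} drops that hypothesis. What you actually need is the specific integrality relation $\langle e_1^c\cdot\td,\mu\rangle=-\chi/6\in\Z$ computed inside that proof, which is a Riemann--Roch condition and hence necessary for \emph{any} closed almost complex manifold with those Betti numbers (cf.\ the remark preceding Theorem~\ref{sigma-euler-div-8k}). Citing that computation rather than the proposition itself makes the argument clean. Apart from this and some arithmetic typos (e.g.\ the expression ``$2k-\log_2 k-3\cdot\tfrac12\cdot2$'' should read $4k-2\log_2 k-3$), the argument is sound.
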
 

\begin{corollary}\label{nonexist-2}
In dimension greater than 4, the rarely existing rational projective plane \textup{(}smooth manifold whose Betti number $b_0=b_{n/2}=b_n=1$\textup{)} does not admit any almost complex structure.  Equivalently, there does not exist any closed almost complex manifold whose sum of Betti numbers equals three. 
\end{corollary}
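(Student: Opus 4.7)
The plan is to derive Corollary \ref{nonexist-2} as an immediate consequence of Theorem \ref{nonexist-1}, with a preliminary step verifying the equivalence of the two formulations. First, I would argue that a closed almost complex manifold $M^n$ with sum of Betti numbers equal to $3$ is automatically a rational projective plane of dimension $n=4k$. Indeed, an almost complex structure forces $n$ to be even and $M$ to be orientable, so Poincar\'{e} duality gives $b_0=b_n=1$. The only remaining nonzero Betti number $b_i$ must satisfy $b_{n-i}=b_i$, so either $i=n/2$ or we produce two further nonzero Betti numbers. Hence $b_{n/2}=1$. Moreover, if $n\equiv 2\pmod{4}$ the intersection form on $H^{n/2}$ is skew-symmetric and nondegenerate, which is impossible on a one-dimensional space; so $n=4k$ and $M$ is a rational projective plane. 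The converse is immediate since any rational projective plane has exactly three nonzero Betti numbers summing to $3$.

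Once the equivalence is in hand, the corollary reduces to showing that for $n=4k$ with $k>1$, no rational projective plane admits an almost complex structure. Such a manifold satisfies the Betti number hypotheses of Theorem \ref{nonexist-1} with $b_{n/2}=1$, which is odd, and in particular has Euler characteristic $\chi=3$. But Theorem \ref{nonexist-1} asserts that any closed almost complex manifold satisfying those hypotheses must have even $\chi$ (equivalently, even $b_{n/2}$). This contradiction rules out the almost complex structure and finishes the proof.

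The real content is entirely packaged inside Theorem \ref{nonexist-1}, which in turn is driven by the $2$-adic divisibility estimates in Theorems \ref{sigma-euler-div-8k} and \ref{sigma-euler-div-8k+4}. The only step that requires any genuine argument in the corollary itself is the topological reduction to the rational projective plane case; after that, the parity obstruction on $\chi$ delivers the contradiction with no further calculation. Consequently I do not expect any serious obstacle beyond carefully writing out the Poincar\'{e} duality / skew-symmetric intersection form dichotomy to rule out $n\equiv 2\pmod 4$.
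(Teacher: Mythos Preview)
Your proposal is correct and follows the same route as the paper: the corollary is presented there as an immediate consequence of Theorem~\ref{nonexist-1} (the parity obstruction on $\chi$), with no separate proof given. Your additional paragraph carefully justifying the equivalence of the two formulations via Poincar\'e duality and the skew-symmetry of the intersection form when $n\equiv 2\pmod 4$ is a welcome elaboration that the paper leaves implicit.
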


\begin{remark}\label{rmk-nonexist-2}  
One can refer to \cite{Su09}, \cite{Su14}, \cite{FS16}, \cite{KS17} for results on existence of rational projective planes. In summer 2018, upon seeing the arXiv paper \cite{AM19} showing that an almost complex manifold with sum of Betti numbers three can only exists in dimension $n=2^a$, the author had worked out the nonexistence result in Corollary \ref{nonexist-2} using the signature equation and the integrality condition that $\langle e_1^ce_1^c\mathord{\cdot} \td, \mu\rangle=\frac{c_{2k}^2[M^{8k}]}{[(2k-1)!]^2}\in\Z$. In 2019, the author was informed that Jiahao Hu had also obtained this result independently using the signature equation and the integrality of Todd genus \cite{JH21}. Now we know more generally, there does not exist almost complex manifold (above dimension 4) with Betti numbers concentrated in the middle dimension and the sum of Betti numbers being odd. The lower bound of the 2-adic order of the Euler characteristic increases with respect to the dimension of the manifold.
\end{remark}

%%%%%%%%%%%%%%%%%%%%%%%%%%%%%%%%%%%%%%

\noindent

\end{document}